\documentclass[12pt]{article}
\usepackage{graphicx}
\usepackage{amsmath}
\usepackage{amssymb}
\usepackage{theorem}

\sloppy

\usepackage{fancyhdr}

\sloppy

\pagestyle{fancy}
 \fancyhf{}
 \rhead{Decay in norm of transfer operators for semiflows}
 \lhead{15 December 2020}
 \cfoot{\thepage}

 \pagestyle{plain}

\numberwithin{equation}{section}

\textheight=8.5in
\textwidth=6.0in
\addtolength{\topmargin}{-.3in}
\addtolength{\oddsidemargin}{-.25in}

\newtheorem{thm}{Theorem}[section]
\newtheorem{lemma}[thm]{Lemma}

\newtheorem{prop}[thm]{Proposition}
\newtheorem{cor}[thm]{Corollary}
{\theorembodyfont{\rmfamily}
\newtheorem{defn}[thm]{Definition}

\newtheorem{rmk}[thm]{Remark}
}

\newcommand{\qed}{\hfill \mbox{\raggedright \rule{.07in}{.1in}}}
 
\newenvironment{proof}{\vspace{1ex}\noindent{\bf
Proof}\hspace{0.5em}}{\hfill\qed\vspace{1ex}}
\newenvironment{pfof}[1]{\vspace{1ex}\noindent{\bf Proof of
#1}\hspace{0.5em}}{\hfill\qed\vspace{1ex}}

\newcommand{\R}{{\mathbb R}}

\newcommand{\C}{{\mathbb C}}

\newcommand{\N}{{\mathbb N}}

\newcommand{\cF}{{\mathcal F}}

\newcommand{\sumj}{{\textstyle\sum_j}}

\newcommand{\tY}{{\widetilde Y}}
\newcommand{\tR}{\widetilde R}
\newcommand{\tF}{\widetilde F}
\newcommand{\tmu}{\tilde\mu}
\newcommand{\tvarphi}{\widetilde\varphi}
\newcommand{\hR}{\widehat R}
\newcommand{\hT}{\widehat T}
\newcommand{\hU}{\widehat U}
\newcommand{\hV}{\widehat V}
\newcommand{\hA}{\widehat A}
\newcommand{\hB}{\widehat B}

\newcommand{\hE}{\widehat E}
\newcommand{\hG}{\widehat G}
\newcommand{\hL}{\widehat L}
\newcommand{\hH}{\widehat H}

\newcommand{\eps}{\epsilon}
\newcommand{\supp}{\operatorname{supp}}

\newcommand{\Leb}{\operatorname{Leb}}
\newcommand{\spec}{\operatorname{spec}}

\newcommand{\SMALL}{\textstyle}
\newcommand{\BIG}{\displaystyle}

\title{Decay in norm of transfer operators for semiflows}

\author{Ian Melbourne\thanks{Mathematics Institute, University of Warwick, Coventry, CV4 7AL, UK}
\and Nicol\`o Paviato
\thanks{Mathematics Institute, University of Warwick, Coventry, CV4 7AL, UK}
 \and Dalia Terhesiu\thanks{Mathematisch Instituut,
University of Leiden, Niels Bohrweg 1, 2333 CA Leiden, Netherlands}}

\date{9 April 2021}

\begin{document}

 \maketitle

\begin{abstract}
We establish exponential decay in H\"older norm of transfer operators applied to smooth observables of uniformly and nonuniformly expanding semiflows with exponential decay of correlations.  
\end{abstract}

 \section{Introduction} 
 \label{sec-intro}

Exponential decay of correlations is well-understood for large classes of uniformly and nonuniformly expanding maps, see for example~\cite{Bowen75,GoraBoyarsky89,HofbauerKeller82,Keller85,Liverani13,Ruelle78,Rychlik83,Saussol00,Sinai72,Young98}.
The typical method of proof is to establish a spectral gap for the associated transfer operator $L$.  Such a spectral gap yields a decay rate $\|L^n v-\int v\|\le C_v e^{-an}$ for $v$ lying in a suitable function space, where $a$, $C_v$ are positive constants.  Decay of correlations is an immediate consequence of such decay for $L^n$.

Results on decay of correlations lead to
numerous statistical limit theorems. Although not needed for results such as the central limit theorem, strong norm control on $L^nv$ is often useful for finer statistical properties.  For example, rates of convergence in the central limit theorem~\cite{Gouezel05} and the associated functional central limit theorem~\cite{AntoniouM19} rely heavily on control of operator norms.

In this paper, we consider norm decay of transfer operators for uniformly and nonuniformly expanding semiflows.  Here, the standard method is to deduce decay of the correlation function from analyticity of Laplace transforms, bypassing spectral properties of $L_t$, see~\cite{Dolgopyat98a,Liverani04,Pollicott85}. As far as we know, the only result on spectral gaps for transfer operators of semiflows is due to Tsujii~\cite{Tsujii08}.  However, this result is for suspension semiflows over the doubling map with a $C^3$ roof function, where the smoothness of the roof function is crucial and very restrictive.  A similar result for contact Anosov flows is proved in~\cite{Tsujii10}.
Both of the papers~\cite{Tsujii08,Tsujii10} obtain spectral gaps for $L_t$ acting on a suitable anisotropic Banach space. 
Apart from these, there are apparently no previous results on norm decay of transfer operators for semiflows and flows.

Recently, in~\cite{MPTapp}, we showed that spectral gaps are impossible in H\"older spaces with exponent greater than $\frac12$ (and in any Banach space that embeds in such a H\"older space).  
Nevertheless, our aim of controlling the H\"older norm of $L_tv$ for a large class of semiflows and observables $v$ remains viable, 
and our main result is the first in this direction.
We consider uniformly and nonuniformly expanding semiflows satisfying a Dolgopyat-type estimate~\cite{Dolgopyat98a}.  Such an estimate plays a key role in proving exponential decay of correlations for the semiflow.
Theorem~\ref{thm:normdecay} below shows how to use this estimate to prove exponential decay of
$L_tv$ in a H\"older norm for smooth mean zero observables satisfying a good support condition.   
Apart from the Dolgopyat estimate, the main ingredient is an operator renewal equation for semiflows~\cite{MT17} which enables consideration of the operator Laplace transform $\int_0^\infty e^{-st}L_t\,dt$.

The remainder of the paper is organised as follows.
In Section~\ref{sec:normdecay}, we recall the setup for nonuniformly expanding semiflows with exponential decay of correlations and state our main result, Theorem~\ref{thm:normdecay}, on decay in norm.
In Section~\ref{sec:proof}, we prove Theorem~\ref{thm:normdecay}.

\vspace{-2ex}
\paragraph{Notation}
We use ``big O'' and $\ll$ notation interchangeably, writing $a_n=O(b_n)$ or $a_n\ll b_n$
if there are constants $C>0$, $n_0\ge1$ such that
$a_n\le Cb_n$ for all $n\ge n_0$.

\section{Setup and statement of the main result}
\label{sec:normdecay}

In this section, we state our result on H\"older norm decay of transfer operators for uniformly and nonuniformly expanding semiflows.

Let $(Y,d)$ be a bounded metric space with Borel probability measure $\mu$ and an at most countable measurable partition $\{Y_j\}$.
Let $F:Y\to Y$  be a measure-preserving transformation such that $F$ restricts to a measure-theoretic bijection from $Y_j$ onto $Y$ for each $j$.
Let $g=d\mu/(d\mu\circ F)$ be the inverse Jacobian of $F$.  

Fix $\eta\in(0,1)$.  
Assume that there are constants $\lambda>1$ and $C>0$ such that
$d(Fy,Fy')\ge \lambda d(y,y')$ and 
$|\log g(y)-\log g(y')|\le Cd(Fy,Fy')^\eta$
for all $y,y'\in Y_j$, $j\ge1$.
In particular, $F$ is a Gibbs-Markov map as in~\cite{AD01} (see also~\cite{Aaronson,ADU93}) with ergodic (and mixing) invariant measure $\mu$.  

Let
$\varphi:Y\to[2,\infty)$ be a piecewise continuous roof function.
We assume that there is a constant $C>0$ such that
\begin{equation} \label{eq:phi}
|\varphi(y)-\varphi(y')|\le Cd(Fy,Fy')^\eta
\end{equation}
for all $y,y'\in Y_j$, $j\ge1$.
Also, we assume exponential tails, namely that there exists $\delta_0>0$ such that
\begin{equation} \label{eq:exp}
\sumj \mu(Y_j)e^{\delta_0|1_{Y_j}\varphi|_\infty} 
<\infty.
\end{equation}

Define the suspension 
$Y^\varphi=\{(y,u)\in Y\times[0,\infty):u\in [0,\varphi(y)]\}/\sim$
where $(y,\varphi(y))\sim (Fy,0)$.
The suspension semiflow $F_t:Y^\varphi\to Y^\varphi$ is given by
$F_t(y,u)=(y,u+t)$ computed modulo identifications.
We define the ergodic $F_t$-invariant probability measure
$\mu^\varphi=(\mu\times{\rm Lebesgue})/\bar\varphi$
where $\bar\varphi=\int_Y\varphi\,d\mu$.
\footnote{We call such semiflows ``nonuniformly expanding'' since they are the continuous time analogue of maps that are nonuniformly expanding in the sense of Young~\cite{Young98}. ``Uniformly expanding'' semiflows are those with $\varphi$ bounded; they have bounded distortion as well as uniform expansion.}

Let $L_t:L^1(Y^\varphi)\to L^1(Y^\varphi)$ denote the transfer operator corresponding to $F_t$ (so $\int_{Y^\varphi} L_tv\,w\,d\mu^\varphi=\int_{Y^\varphi} v\,w\circ F_t\,d\mu^\varphi$ for all $v\in L^1(Y^\varphi)$, $w\in L^\infty(Y^\varphi)$, $t>0$)
and let $R_0:L^1(Y)\to L^1(Y)$ denote the transfer operator for $F$.
Recall (see for example~\cite{AD01}) that 
$(R_0v)(y)=\sumj g(y_j)v(y_j)$ where $y_j$ is the unique preimage of $y$ under $F|Y_j$, and there is a constant $C>0$ such that
\begin{equation} \label{eq:GM}
|g(y)|\le C\mu(Y_j), \qquad
|g(y)-g(y')|\le C\mu(Y_j)d(Fy,Fy')^\eta,
\end{equation} 
for all $y,y'\in Y_j$, $j\ge1$.

\paragraph{Function space on $Y^\varphi$}
Let $Y^\varphi_j=\{(y,u)\in Y^\varphi:y\in Y_j\}$.
Fix $\eta\in(0,1]$, $\delta>0$.  
For $v:Y^\varphi\to\R$, define
$|v|_{\delta,\infty}=\sup_{(y,u)\in Y^\varphi} e^{-\delta u}|v(y,u)|$
and
\[
\|v\|_{\delta,\eta}=|v|_{\delta,\infty}+|v|_{\delta,\eta},\qquad 
|v|_{\delta,\eta}=\sup_{j\ge1}\sup_{(y,u),(y',u)\in Y_j^\varphi,\,y\neq y'} e^{-\delta u}\frac{|v(y,u)-v(y',u)|}{d(y,y')^\eta}.
\]
Then $\cF_{\delta,\eta}(Y^\varphi)$ consists of observables $v:Y^\varphi\to\R$ with
$\|v\|_{\delta,\eta}<\infty$.

Next, define $\partial_uv$ to be the partial derivative of $v$ with respect to $u$ at points $(y,u)\in Y^\varphi$ with $u\in(0,\varphi(y))$ and to be the appropriate one-sided partial derivative when $u\in\{0,\,\varphi(y)\}$.
For $m\ge0$, define $\cF_{\delta,\eta,m}(Y^{\varphi})$ to consist of observables
$v:Y^\varphi\to\R$ such that $\partial_u^jv\in \cF_{\delta,\eta}(Y^\varphi)$ 
for $j=0,1,\dots,m$, with norm
$\|v\|_{\delta,\eta,m}=\max_{j=0,\dots,m} \|\partial_u^jv\|_{\delta,\eta}$.

\begin{defn} \label{def:good}
We say that a function $u:Y^\varphi\to\R$ has {\em good support} if there exists
$r>0$ such that
$\supp v\subset\{(y,u)\in Y\times\R:u\in[r,\varphi(y)-r]\}$.
\end{defn}

For functions with good support, $\partial_uv$ coincides with the derivative $\partial_tv=\lim_{h\to0} (v\circ F_h-v)/h$ in the flow direction.

Let
\[
\cF_{\delta,\eta,m}^0(Y^{\varphi})=\{v\in \cF_{\delta,\eta,m}(Y^{\varphi}):{\SMALL\int}_{Y^\varphi}v\,d\mu^\varphi=0\}.
\]
We write $\cF_{\delta,\eta}(Y^\varphi)$ 
and $\cF_{\delta,\eta}^0(Y^\varphi)$ when $m=0$.

\paragraph{Function space on $Y$}
For $v:Y\to\R$, define
\[
\|v\|_\eta=|v|_\infty+|v|_\eta,
\qquad
|v|_\eta=\sup_{j\ge1}\sup_{y,y'\in Y_j,\,y\neq y'}|v(y)-v(y')|/d(y,y')^\eta.
\]
Let $\cF_\eta(Y)$ consist of observables $v:Y\to\R$ with
$\|v\|_\eta<\infty$.

\paragraph{Dolgopyat estimate}

Define the twisted transfer operators
\[
\hR_0(s):L^1(Y)\to L^1(Y), \qquad
\hR_0(s)v=R_0(e^{-s\varphi}v).
\]
We assume that there exists
$\gamma\in(0,1)$, $\eps>0$, $m_0\ge0$, $A,D>0$ such that
\begin{align} \label{eq:Dolg}
{\|\hR_0(s)^n\|}_{\cF_\eta(Y)\mapsto \cF_\eta(Y)} \le |b|^{m_0}\gamma^n
\end{align}
 for all $s=a+ib\in\C$ with $|a|<\eps$, $|b|\ge D$ and all $n\ge A\log|b|$.  Such an assumption holds in the settings of~\cite{AraujoM16,AGY06, BaladiVallee05,Dolgopyat98a}.  

Now we can state our main result on norm decay for $L_t$.

\begin{thm} \label{thm:normdecay}
Under these assumptions, there exists $\eps>0$, $m\ge1$, $C>0$ such that
\[
\|L_tv\|_{\delta,\eta,1}
\le Ce^{-\eps t}\|v\|_{\delta,\eta,m}
\quad\text{for all $t>0$}  
\]
for all $v\in \cF_{\delta,\eta,m}^0(Y^{\varphi})$ with good support.
\end{thm}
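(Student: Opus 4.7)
The plan is to study the operator Laplace transform $\hL(s)v=\int_0^\infty e^{-st}L_tv\,dt$, continue it meromorphically into a strip $\{\Re s>-\eps\}$ via the renewal equation of~\cite{MT17} and the Dolgopyat estimate~\eqref{eq:Dolg}, and then deduce the theorem by shifting the contour in the inverse Laplace transform. Schematically the renewal equation reads
\[
\hL(s)v=E(s)v+(I-\hR_0(s))^{-1}B(s)v,
\]
with $E(s)$ and $B(s)$ arising by decomposing the orbit of $(y,u)$ under $F_t$ according to the last return to the base $Y\times\{0\}$; both auxiliary operators involve integration of $e^{-s\cdot}v$ along fibers. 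The Dolgopyat estimate yields $\|(I-\hR_0(s))^{-1}\|_{\cF_\eta(Y)}\ll|b|^M$ for some $M$ on the strip $|a|<\eps$, $|b|\ge D$, and the mean-zero hypothesis $\int v\,d\mu^\varphi=0$ removes the only potential pole on the imaginary axis, which sits at $s=0$ and arises from the simple eigenvalue $1$ of $\hR_0(0)=R_0$.

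The growth $|b|^M$ is not integrable along vertical lines, so I would use the smoothness of $v$ to produce compensating negative powers of $s$. Since $v$ has good support, $\partial_u^jv$ vanishes near the top and bottom of every fiber, so $\partial_u$ commutes with $L_t$ and integration by parts in $t$ gives
\[
\hL(s)v=\sum_{j=0}^{m-1}(-1)^js^{-(j+1)}\partial_u^jv+(-1)^ms^{-m}\hL(s)\partial_u^mv.
\]
Choosing $m>M+1$, the final term decays fast enough in $|b|$ for the Bromwich integral $L_tv=\frac{1}{2\pi i}\int_{c-i\infty}^{c+i\infty}e^{st}\hL(s)v\,ds$ to be deformed from $\Re s=c>0$ to $\Re s=-\eps/2$. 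The shift is permissible because $v$ and each $\partial_u^jv$ are mean zero (the latter automatically by good support), producing $\|L_tv\|_{\delta,\eta}\ll e^{-\eps t/2}\|v\|_{\delta,\eta,m}$; the derivative part of the norm, $\|\partial_uL_tv\|_{\delta,\eta}=\|L_t\partial_uv\|_{\delta,\eta}$, follows by applying the same argument to $\partial_uv\in\cF^0_{\delta,\eta,m-1}$.

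The main obstacle, I expect, lies in matching function spaces rather than in the structural outline above: the Dolgopyat hypothesis is stated on $\cF_\eta(Y)$, whereas the conclusion lives in $\cF_{\delta,\eta,1}(Y^\varphi)$. One must verify that $B(s)v$ lands in $\cF_\eta(Y)$ with norm bounded by $\|v\|_{\delta,\eta,m}$ times a polynomial in $|b|$, that $(I-\hR_0(s))^{-1}$ then produces a function on $Y$ which is reconstructible to a H\"older function on $Y^\varphi$ via the $E(s)$-type terms, and in particular that the H\"older seminorm in the $y$-variable together with the exponential weight $e^{-\delta u}$ survive the whole procedure uniformly in the strip. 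Tracking all these norms simultaneously with the polynomial-in-$|b|$ dependence, and ensuring that the exponential tail hypothesis~\eqref{eq:exp} gives the analyticity needed to define $\hR_0(s)$ in a uniform strip, is where the bulk of the work will go.
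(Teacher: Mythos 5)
Your proposal matches the paper's proof in every essential step: meromorphic continuation of $\hL(s)$ across $\{|\Re s|<\eps\}$ via the operator renewal equation and the Dolgopyat estimate~\eqref{eq:Dolg}, removal of the pole at $s=0$ by the mean-zero hypothesis, integration by parts in $t$ (valid because good support makes $\partial_u$ agree with the flow derivative) to offset the polynomial growth in $|b|$, inverse Laplace transform via a contour shift to $\Re s=-\eps$, and finally commuting $\partial_u$ through $L_t$ to control the derivative part of the $\cF_{\delta,\eta,1}$ norm. The one imprecision is the schematic $\hL(s)=E(s)+(I-\hR_0(s))^{-1}B(s)$, which is not type-correct as written since $(I-\hR_0(s))^{-1}$ acts on $\cF_\eta(Y)$ while $\hL(s)v$ must land in $\cF_{\delta,\eta}(Y^\varphi)$; the paper sandwiches the resolvent as $\hL=\hA\hU(I-\hR)^{-1}\hB$ plus residual terms, passing through the auxiliary space $\tY=Y\times[0,1]$ --- but you correctly flag exactly this function-space bookkeeping as where the real work lies.
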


\begin{rmk} 
Since the norm applied to $v$ is stronger than the norm applied to $L_tv$, Theorem~\ref{thm:normdecay} does not imply a spectral gap for $L_t$.
We note that the norm on $\cF_{\delta,\eta,1}$ gives no H\"older control in the flow direction when passing through points of the form $(y,\varphi(y))$.
This lack of control is a barrier to mollification arguments of the type usually used to pass from smooth observables to H\"older observables.
In fact, such arguments are doomed to fail at the operator level by~\cite[Theorem~1.1]{MPTapp} when $\eta>\frac12$ and hence seem unlikely for any $\eta$.
\end{rmk}

\begin{rmk} 
Usually, we can take $m_0\in(0,1)$ in~\eqref{eq:Dolg} in which case $m=3$ suffices in Theorem~\ref{thm:normdecay}.

There are numerous simplifications when $\{Y_j\}$ is a finite partition. 
In particular, conditions~\eqref{eq:phi} and~\eqref{eq:exp} are redundant and we can take $\delta=0$.  
\end{rmk}

\section{Proof of Theorem~\ref{thm:normdecay}}
\label{sec:proof}

Our proof of norm decay is broken into three parts.
In Subsection~\ref{sec:renewal}, we recall a continuous-time operator renewal equation~\cite{MT17} which enables estimates of Laplace transforms of transfer operators at the level of $Y$.  In Subsection~\ref{sec:TL}, we show how to pass to estimates of Laplace transforms of $L_t$.
In Subsection~\ref{sec:contour}, we invert the Laplace transform to obtain norm decay of $L_t$.

\subsection{Operator renewal equation}
\label{sec:renewal}

Let $\tY=Y\times[0,1]$ and define 
\[
\tF:\tY\to\tY, \qquad \tF(y,u)=(Fy,u),
\]
with transfer operator $\tR:L^1(\tY)\to L^1(\tY)$. 
Also, define 
\[
\tvarphi:\tY\to[2,\infty), \quad \tvarphi(y,u)=\varphi(y).
\]
Define the twisted transfer operators
\[
\hR(s):L^1(\tY)\to L^1(\tY), \qquad
\hR(s)v=\tR(e^{-s\tvarphi}v).
\]

Let $\tY_j=Y_j\times[0,1]$.
For $v:\tY\to\R$, define
\[
\|v\|_\eta=|v|_\infty+|v|_\eta,
\qquad
|v|_\eta=\sup_{j\ge1}\sup_{(y,u),(y',u)\in\tY_j,\,y\neq y'}|v(y,u)-v(y',u)|/d(y,y')^\eta.
\]
Let $\cF_\eta(\tY)$ consist of observables $v:\tY\to\R$ with
$\|v\|_\eta<\infty$.
Let
\[
\cF_\eta^0(\tY)=\{v\in \cF_\eta(\tY):{\SMALL\int}_\tY v\,d\tmu=0\}
\]
where $\tmu=\mu\times\Leb_{[0,1]}$.

\begin{lemma} \label{lem:main}
Write $s=a+ib\in\C$.
There exists $\eps>0$, $m_1\ge0$, $C>0$ such that
\begin{itemize}
\item[(a)]
$s\mapsto(I-\hR(s))^{-1}:\cF_\eta^0(\tY)\to \cF_\eta(\tY)$ is analytic on
$\{|a|< \eps\}$;
\item[(b)]
$s\mapsto(I-\hR(s))^{-1}:\cF_\eta(\tY)\to \cF_\eta(\tY)$ is analytic on
$\{|a|< \eps\}$ except for a simple pole at $s=0$;
\item[(c)]
\(
{\|(I-\hR(s))^{-1}\|}_{\cF_\eta(\tY)\mapsto \cF_\eta(\tY)}\le C|b|^{m_1}
\)
for $|a| \le\eps$, $|b|\ge1$.
\end{itemize}
\end{lemma}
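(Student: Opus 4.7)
The plan is to reduce every assertion to the corresponding statement about $\hR_0(s)$ on $\cF_\eta(Y)$, exploiting the trivial product structure of $\tF$. Since $\tF(y,u)=(Fy,u)$ fixes the $u$-coordinate and $\tvarphi(y,u)=\varphi(y)$ does not depend on $u$, identifying $v\in\cF_\eta(\tY)$ with the family $v_u:=v(\cdot,u)\in\cF_\eta(Y)$ gives $(\hR(s)v)_u=\hR_0(s)v_u$ and $\|v\|_{\cF_\eta(\tY)}=\sup_u\|v_u\|_{\cF_\eta(Y)}$. Thus ${\|\hR(s)^n\|}_{\cF_\eta(\tY)}={\|\hR_0(s)^n\|}_{\cF_\eta(Y)}$ for every $n$, and any resolvent estimate or analyticity statement for $\hR_0(s)$ lifts verbatim to $\hR(s)$. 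So it suffices to prove the analogues of (a)--(c) for $\hR_0(s)$ on $\cF_\eta(Y)$.

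For the high-frequency range $|b|\ge D$ in (c), I would apply the Dolgopyat estimate~\eqref{eq:Dolg} via a Neumann-series splitting. Choosing $\eps<\delta_0$ small enough that~\eqref{eq:phi},~\eqref{eq:exp} and~\eqref{eq:GM} yield a uniform crude bound $\|\hR_0(s)\|_{\cF_\eta(Y)}\le M$ on $\{|a|\le\eps\}$, I would write $(I-\hR_0(s))^{-1}=\sum_{k\ge 0}\hR_0(s)^k$ and split the sum at $k_\star=\lceil A\log|b|\rceil$. The head satisfies $\sum_{k<k_\star}\|\hR_0(s)^k\|\le k_\star M^{k_\star}=O(|b|^{A\log M}\log|b|)$, while the tail is a geometric series $\sum_{k\ge k_\star}|b|^{m_0}\gamma^k=O(|b|^{m_0-A\log(1/\gamma)})$. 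Both are polynomial in $|b|$, giving ${\|(I-\hR_0(s))^{-1}\|}\le C|b|^{m_1}$ on $\{|a|\le\eps,\,|b|\ge D\}$, with analyticity on this open region coming from uniform convergence of the series.

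For the bounded-frequency range $|b|\le D$ I would appeal to classical spectral perturbation. Standard Gibbs--Markov theory gives $R_0=\hR_0(0)$ a spectral gap on $\cF_\eta(Y)$: a simple eigenvalue $1$ with eigenfunction $\mathbf 1$ and dual functional $v\mapsto\int_Y v\,d\mu$, with the rest of the spectrum inside a disc of radius strictly less than $1$. Assumption~\eqref{eq:exp} makes $s\mapsto\hR_0(s)$ operator-analytic on $\{|a|<\delta_0\}$, so Kato-type perturbation theory produces an analytic simple eigenvalue $\lambda(s)$ near $1$ with $\lambda(0)=1$ and $\lambda'(0)=-\bar\varphi\ne 0$, together with an analytic rank-one projection $\Pi(s)$ satisfying $\Pi(0)v=(\int v\,d\mu)\mathbf 1$. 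The decomposition $(I-\hR_0(s))^{-1}=(1-\lambda(s))^{-1}\Pi(s)+(\text{analytic})$ then yields a simple pole at $s=0$ whose residue is proportional to $\Pi(0)$; the pole is hence removable on the mean-zero subspace $\cF_\eta^0(Y)$, giving (a) and (b) near the origin. For $s=ib$ with $b$ in a compact set bounded away from $0$, aperiodicity of $\varphi$ (standing in this context) rules out $1\in\spec\hR_0(ib)$, and by continuity of the resolvent together with compactness of $\{\eps'\le|b|\le D,\,|a|\le\eps\}$ the resolvent is uniformly bounded there, closing the gap with the Dolgopyat regime.

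The main obstacle is the low-frequency analysis. The high-frequency piece is a mechanical Neumann-series splitting once~\eqref{eq:Dolg} is assumed. The low-frequency piece requires three coordinated inputs from classical spectral theory: the derivative identity $\lambda'(0)=-\bar\varphi\ne 0$ to guarantee an order-one pole at $s=0$; the identification of the perturbed projection $\Pi(s)$ as a rank-one operator whose residue factors through the mean functional, so that the pole is cancelled precisely on $\cF_\eta^0$; and aperiodicity of $\varphi$ throughout the compact set $\{\eps'\le|b|\le D\}$, to rule out stray unit-modulus spectrum away from $b=0$. Once these are in place, the trivial $[0,1]$-extension from $Y$ to $\tY$ promotes the whole picture to $\hR(s)$ without additional effort.
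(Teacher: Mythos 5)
There is a genuine gap in your high-frequency argument. For unbounded $\varphi$ (the main case here; $\varphi$ is bounded only when the partition $\{Y_j\}$ is finite), the claimed uniform bound $\|\hR_0(s)\|_{\cF_\eta(Y)}\le M$ on $\{|a|\le\eps\}$ is false: the sup-norm part is indeed controlled via~\eqref{eq:exp} and~\eqref{eq:GM}, but the H\"older seminorm picks up the H\"older constant of the twist $e^{-ib\varphi}$ on each $Y_j$, which by~\eqref{eq:phi} is of order $|b|$, so the best elementary bound is $\|\hR_0(s)\|_\eta\ll 1+|b|$. Feeding this into your head sum gives $\sum_{k<k_\star}\|\hR_0(s)^k\|_\eta\ll k_\star\bigl(C(1+|b|)\bigr)^{k_\star}$ with $k_\star\approx A\log|b|$, i.e.\ a bound of size $|b|^{O(\log|b|)}$, which is super-polynomial and destroys part~(c). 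The missing ingredient, which the paper imports from~\cite[Corollary~2.8]{AraujoM16}, is the Lasota--Yorke/Dolgopyat-type iterated estimate $\|\hR_0(s)^n\|_\eta\ll|b|\lambda(a)^n$, in which the factor $|b|$ appears once rather than once per iterate (equivalently, work in the $|b|$-adapted norm $|\cdot|_\infty+|b|^{-1}|\cdot|_\eta$, where $\hR_0(s)$ is bounded uniformly in $b$, and pay a single factor $|b|$ on conversion). With that estimate in hand, your splitting of the series at $[A\log|b|]$ is essentially the paper's factorization $Z(s)=\bigl(I+\hR_0(s)+\dots+\hR_0(s)^{[A\log|b|]-1}\bigr)\bigl(I-\hR_0(s)^{[A\log|b|]}\bigr)^{-1}$ and does give $\|Z(s)\|_\eta\ll|b|^{m_1}$; without it, the $|b|\ge D$ regime is not ``mechanical'' and your proof of (c) fails.

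The second gap is your appeal to ``aperiodicity of $\varphi$ (standing in this context)'' to exclude $1\in\spec\hR_0(ib)$ for $0<|b|\le D$: no such hypothesis is assumed in the setup (the assumptions are only~\eqref{eq:phi},~\eqref{eq:exp},~\eqref{eq:GM} and the Dolgopyat estimate~\eqref{eq:Dolg}), so this exclusion must be derived, not invoked. The paper obtains it by bootstrapping the high-frequency bound: if $\hR_0(ib)v=v$ with $v\in\cF_\eta(Y)$, then $e^{ib\varphi}\,v\circ F=v$, hence $e^{iqb\varphi}\,v^q\circ F=v^q$ and $\hR_0(iqb)v^q=v^q$ for every $q\ge1$; choosing $q$ with $q|b|>D$ contradicts the already-established boundedness of $Z(iqb)$ unless $v=0$. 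Your remaining steps---the fiberwise reduction from $\tY$ to $Y$ via $(\hR(s)v)(y,u)=(\hR_0(s)v^u)(y)$, analytic perturbation near $s=0$ with $\lambda'(0)=-\bar\varphi\neq0$ and cancellation of the simple pole on mean-zero observables, and compactness on $\{\eps'\le|b|\le D\}$---do match the paper's proof, so repairing the two points above would bring your argument in line with it.
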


\begin{proof}
It suffices to verify these properties  for
$Z(s)=(I-\hR_0(s))^{-1}$ on $Y$. They immediately transfer to
$(I-\hR(s))^{-1}$ on $\tY$ since
$(\hR v)(y,u)=(\hR_0 v^u)(y)$
where $v^u(y)=v(y,u)$.

The arguments for passing from~\eqref{eq:Dolg} to the desired properties for $Z(s)$ are standard.
For completeness, we sketch these details now recalling arguments from~\cite{AraujoM16}.
Define $\cF_\eta(Y)$ with norm $\|\;\|_\eta$ by restricting to $u=0$
(this coincides with the usual H\"older space on $Y$).
Let $A$, $D$, $\eps$ and $m_0$ be as in~\eqref{eq:Dolg}.
Increase $A$ and $D$ so that $D>1$ and
$|b|^{m_0}\gamma^{[A\log|b|]}\le\frac12$ for $|b|\ge D$.
Suppose that $|a|\le\eps$, $|b|\ge D$.
Then $\|\hR_0(s)^{[A\log|b|]}\|_\eta\le |b|^{m_0}\gamma^{[A\log|b|]}\le\frac12$ and
\(
\|(I-\hR_0(s)^{[A\log|b|]})^{-1}\|_\eta\le 2.
\)

As in~\cite[Proposition~2.5]{AraujoM16}, we can shrink $\eps$ so
that $s\to \hR_0(s)$ is continuous on $\cF_\eta(Y)$ for $|a|\le\eps$.
The simple eigenvalue $1$ for $\hR_0(0)=R_0$ extends to a continuous family of simple eigenvalues $\lambda(s)$ for $|s|\le \eps$.
Hence we can choose $\eps$ so that
$\frac12<\lambda(a)<2$ for $|a|\le \eps$.
By~\cite[Corollary~2.8]{AraujoM16}, $\|\hR_0(s)^n\|_\eta\ll |b|\lambda(a)^n\le |b|2^n$ for all $n\ge1$, 
$|a|\le \eps$, $|b|\ge D$.  Hence 
\begin{align*}
\|Z(s)\|_\eta
& \le \big(1+\|\hR_0(s)\|_\eta +\dots+\|\hR_0(s)^{[A\log|b|]-1}\|_\eta\big)\|(I-\hR_0(s)^{[A\log|b|]})^{-1}\|_\eta
\\ & \ll  (\log|b|)\, |b|\,2^{A\log|b|}
\le |b|^{m_1},
\end{align*}
with $m_1=1+A\log 2$.
This proves analyticity on the region $\{|a|<\eps, |b|>D\}$ with the desired estimates for property~(c) on this region.

For $|a|\le\eps$, $|b|\le D$, we recall arguments from the proof of~\cite[Lemma~2.22]{AraujoM16} (where $\hR_0(s)$ is denoted $Q_s$).
For $\eps$ sufficiently small, 
the part of spectrum of $\hR_0(s)$ that is close to $1$ consists only of isolated eigenvalues.
Also, the spectral radius of $\hR_0(s)$ is at most $\lambda(a)$ and $\lambda(a)<1$ for
$a\in[0,\eps]$, so $s\mapsto Z(s)$ is analytic on $\{0<a<\eps\}$.

Suppose that $\hR_0(ib)v=v$ for some $v\in\cF_\eta(Y)$, $b\neq0$.  
Choose $q\ge1$ such that $q|b|>D$.  Since
$\hR_0(s)$ is the $L^2$ adjoint of $v\mapsto e^{s\varphi}v\circ F$, we have
$e^{ib\varphi}v\circ F=v$.  Hence $e^{iqb\varphi}v^q\circ F=v^q$ and so
$\hR_0(iqb)v^q=v^q$.  But
$\|Z(iqb)v^q\|_\eta<\infty$, so $v=0$.  Hence $1\not\in \spec\hR_0(ib)$ for all $b\neq0$.
It follows that for all $b\neq0$ there exists an open set $U_b\subset\C$ containing $ib$ such that $1\not\in\spec\hR_0(s)$ for all $s\in U_b$, and so
$s\mapsto Z(s)$ is analytic on $U_b$.

Next, we recall that for $s$ near to zero, 
$\lambda(s)=1+cs+O(s^2)$ where $c<0$.
Hence $s\mapsto Z(s)$ has a simple pole at zero.
It follows that there exists $\eps>0$ such that $s\mapsto Z(s)$ is analytic
on $\{|a|<\eps,|b|<2D\}$ except for a simple pole at $s=0$.  Combining this with the estimates on $\{|a|<\eps,|b|\ge D\}$ we have proved properties~(b) and (c) for $Z(s)$. 

Finally, the spectral projection $\pi$ corresponding to the eigenvalue $\lambda(0)=1$ for $\hR_0(0)=R$ is given by
$\pi v=\int_Y v\,d\mu$.  Hence the pole disappears on restriction to observables of mean zero, proving property~(a) for $Z(s)$.
\end{proof}

Next define
\[
T_tv=1_\tY L_t(1_\tY v), \qquad
U_tv=1_\tY L_t(1_{\{\tvarphi>t\}} v)
\]
and
\[
\qquad
\hT(s)=\int_0^\infty e^{-st}T_t\,dt,
\qquad
\hU(s)=\int_0^\infty e^{-st}U_t\,dt,
\]

By~\cite[Theorem~3.3]{MT17}, we have the operator renewal equation
\[
\hT=\hU(I-\hR)^{-1}.
\]

\begin{prop} \label{prop:U}
 There exists $\eps>0$, $C>0$ such that
$s\mapsto \hU(s):\cF_\eta(\tY)\to\cF_\eta(\tY)$ is analytic on $\{|a|<\eps\}$ and
${\|\hU(s)\|}_{\cF_\eta(\tY)\mapsto \cF_\eta(\tY)}\le C|s|$ for $|a|\le\eps$.
\end{prop}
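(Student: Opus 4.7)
The plan is to derive an explicit closed-form expression for $\hU(s)v$ by computing $U_tv$ directly, and then read off both the analyticity and the norm bound from that formula. The key structural observation is that $\tY=Y\times[0,1]$ has thickness $1$ while $\varphi\ge 2$, so under the filter $\tvarphi>t$ the $F_t$-preimages of any $(y',\tau)\in\tY$ contributing to $U_tv$ can involve at most one crossing of the roof: a second crossing would require time at least $\varphi\ge 2>t$, contradicting the filter. This is the only nontrivial geometric input; the rest is manipulation.

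Unwinding $L_t$ as the usual sum over preimages, I would verify that only the ``no crossing'' preimage $(y',\tau-t)$, valid for $t\in[0,\tau]$, and the ``one crossing'' preimages $(y,\tau+\varphi(y)-t)$ with $y\in F^{-1}(y')$ and $t\in[\tau+\varphi(y)-1,\varphi(y))$, contribute, giving
\[
U_tv(y',\tau)=1_{\{0\le t\le\tau\}}v(y',\tau-t)+\sum_{y\in F^{-1}(y')}g(y)\,1_{\{\tau+\varphi(y)-1\le t<\varphi(y)\}}v(y,\tau+\varphi(y)-t).
\]
Multiplying by $e^{-st}$, integrating over $t\in[0,\infty)$, substituting $u=\tau-t$ in the first term and $u=\tau+\varphi(y)-t$ in the second, and using $\hR_0(s)(\,\cdot\,)=R_0(e^{-s\varphi}(\,\cdot\,))$, produces the closed form
\[
\hU(s)v(y',\tau)=e^{-s\tau}\!\left[\int_0^\tau e^{su}v(y',u)\,du+\hR_0(s)\!\left(y\mapsto {\SMALL\int}_\tau^1 e^{su}v(y,u)\,du\right)\!(y')\right].
\]

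Everything then reads off the formula. Analyticity is essentially automatic: the bracketed expression is built from exponentials and integrals of jointly analytic functions over compact $u$-intervals, so is entire in $s$, while $\hR_0(s)$ is analytic on a strip $\{|a|<\eps\}$ after shrinking $\eps$ if needed so that $\eps<\delta_0$, exactly as in the proof of Lemma~\ref{lem:main}. For the operator bound I would split into the two summands. The first, $e^{-s\tau}\int_0^\tau e^{su}v(y',u)\,du$, is uniformly bounded on $\cF_\eta(\tY)$ by a constant multiple of $\|v\|_\eta$ using $\tau\in[0,1]$ and $|e^{\pm s\tau}|\le e^\eps$. For the second, set $\tilde v(y)={\SMALL\int}_\tau^1 e^{su}v(y,u)\,du$; the assumptions~\eqref{eq:phi} and~\eqref{eq:exp} give $\|\tilde v\|_\eta\le C\|v\|_\eta$ uniformly in $\tau$ and in $s$ with $|a|\le\eps$, while the standard twisted H\"older estimate $\|\hR_0(s)\tilde v\|_\eta\ll(1+|s|)\|\tilde v\|_\eta$ (the $|s|$ factor coming from $|e^{-s\varphi}|_\eta\ll|s|\,|\varphi|_\eta$) delivers the claimed linear-in-$|s|$ bound.

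The only genuine obstacle is the opening combinatorial step: verifying cleanly that the ``at most one crossing'' phenomenon really eliminates every $n\ge 2$ contribution, and that the exchange of Laplace integral and sum over preimages is legitimate (which is where the exponential-tails assumption~\eqref{eq:exp} enters). Once that geometric point is secured, the remaining estimates are routine manipulations of compactly supported $u$-integrals together with the already-established mapping properties of $\hR_0(s)$ from the proof of Lemma~\ref{lem:main}.
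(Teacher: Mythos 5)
Your argument is correct and is essentially the paper's own proof: the paper imports your at-most-one-crossing formula for $U_t$ from \cite[Proposition~3.4]{MT17}, takes the same Laplace transform split $\hU=\hU_1+\hU_2$ (your closed form, with $\hU_2(s)v=\tR\hV(s)$, $\hV(s)(y,u)=\int_u^1 e^{s(t-u-\varphi)}v(y,t)\,dt$), and then proves the bound you package as the ``standard twisted estimate'' for $\hR_0(s)$ inline, blockwise on each $Y_j$ via \eqref{eq:phi}, \eqref{eq:GM} and the tail condition \eqref{eq:exp}. The one point to state more carefully is that, since $\varphi$ is unbounded and $\Re s$ may be negative, the bound $\|\hR_0(s)\|_{\cF_\eta(Y)\to\cF_\eta(Y)}\ll 1+|s|$ does not follow from a global inequality like $|e^{-s\varphi}|_\eta\ll|s|\,|\varphi|_\eta$ (note $e^{-s\varphi}$ need not even be bounded), but from exactly this blockwise argument using \eqref{eq:phi}--\eqref{eq:GM} and \eqref{eq:exp} after shrinking $\eps$; with that, your estimate (like the paper's) yields $O(1+|s|)$, which suffices since the bound is only used for $|b|\ge1$.
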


\begin{proof}
By~\cite[Proposition~3.4]{MT17},
\[
(U_tv)(y,u)=\begin{cases} v(y,u-t)1_{[t,1]}(u) & 0\le t\le 1
\\
(\tR v_t)(y,u) & t>1
\end{cases}
\]
where $v_t(y,u)=1_{\{t<\varphi(y)<t+1-u\}}v(y,u-t+\varphi(y))$.
Hence $\hU(s)=\hU_1(s)+\hU_2(s)$ where
\[
(\hU_1(s)v)(y,u)=\int_0^u e^{-st}v(y,u-t)\,dt, \qquad
\hU_2(s)v = \int_1^\infty e^{-st} \tR v_t\,dt.
\]
It is clear that $\|\hU_1(s)v\|_\eta\le e^\eps \|v\|_\eta$.
We focus attention on the second term 
\[
(\hU_2(s)v)(y,u)=\sumj g(y_j)\int_1^\infty e^{-st} v_t(y_j,u)\,dt
=\sumj g(y_j)\hV(s)(y_j,u),
\]
where $\hV(s)(y,u)=\int_u^1 e^{s(t-u-\varphi)} v(y,t)\,dt$.
Clearly, $|1_{Y_j}\hV(s)|_\infty 
\le e^{\eps|1_{Y_j}\varphi|_\infty}|v|_\infty$.
Also, 
\[
\hV(s)(y,u)-\hV(s)(y',u) = I+J,
\]
where
\begin{align*}
I & =\int_u^1 (e^{s(t-u-\varphi(y))}-e^{s(t-u-\varphi(y'))})v(y,t)\,dt, \\
J & =\int_u^1 e^{s(t-u-\varphi(y'))}(v(y,t)-v(y',t))\,dt.
\end{align*}
For $y,y'\in Y_j$,
\[
|I|\le |v|_\infty\int_u^1 e^{\eps(|1_{Y_j}\varphi|_\infty+u-t)}|s||\varphi(y)-\varphi(y')|\,dt
\ll |s||v|_\infty\, e^{\eps|1_{Y_j}\varphi|_\infty} d(Fy,Fy')^\eta
\]
by~\eqref{eq:phi},
and
\[
|J| \le
\int_u^1 e^{\eps(|1_{Y_j}\varphi|_\infty+u-t)}|v(y,t)-v(y',t)|\,dt
\le e^{\eps|1_{Y_j}\varphi|_\infty}|v|_\eta\, d(y,y')^\eta.
\]
Hence $|\hV(s)(y,u)-\hV(s)(y',u)|_\eta\ll 
 |s| e^{\eps|1_{Y_j}\varphi|_\infty} \|v\|_\eta\,d(Fy,Fy')^\eta$.

It follows from the estimates for $1_{Y_j}\hV(s)$ together with~\eqref{eq:GM} that
$\|\hU_2(s)v\|_\eta\ll \sumj |s| \mu(Y_j)e^{\eps|1_{Y_j}\varphi|_\infty} \|v\|_\eta$.
By~\eqref{eq:exp},
$\|\hU_2(s)v\|_\eta\ll  |s| \|v\|_\eta$
 for $\eps$ sufficiently small.
We conclude that $\|\hU(s)v\|_\eta\ll  |s| \|v\|_\eta$.
\end{proof}

\subsection{From $\hT$ on $\tY$ to $\hL$ on $Y^\varphi$}
\label{sec:TL}

Lemma~\ref{lem:main} and Proposition~\ref{prop:U} yield analyticity and estimates for $\hT=\hU(I-\hR)^{-1}$ on $\tY$.
In this subsection, we show how these properties are inherited by
$\hL(s)=\int_0^\infty e^{-st}L_t\,dt$ on $Y^\varphi$.

\begin{rmk} The approach in this subsection is similar to that in~\cite[Section~5]{BMT19} but there are some important differences.
The rationale behind the two step decomposition in Propositions~\ref{prop:L=AT} and~\ref{prop:L} below is that
the discreteness of the decomposition in Proposition~\ref{prop:L=AT} simplifies many formulas significantly.
In particular, the previously problematic term $E_t$ in~\cite{BMT19}  becomes elementary (and vanishes for large $t$ when $\varphi$ is bounded).
The decomposition in Proposition~\ref{prop:L} remains continuous to simplify the estimates in Proposition~\ref{prop:BG}.

Since the setting in~\cite{BMT19} is different (infinite ergodic theory, reinducing) we keep the exposition here self-contained even where the estimates coincide with those in~\cite{BMT19}.
\end{rmk}

Define
\begin{alignat*}{2}
 & A_n:L^1(\tY)\to L^1(Y^\varphi),
& \qquad& 
(A_nv)(y,u)=1_{\{n\le u< n+1\}}(L_nv)(y,u),\;n\ge0, \\[.75ex]
& E_t:L^1(Y^\varphi)\to L^1(Y^\varphi), & \qquad &
(E_tv)(y,u)=1_{\{[t]+1\le u\le\varphi(y)\}}(L_tv)(y,u),\;t>0.
\end{alignat*}

\begin{prop} \label{prop:L=AT}
$\BIG L_t=\sum_{j=0}^{[t]}
A_j1_\tY L_{t-j}+
 E_t$ for $t>0$.
\end{prop}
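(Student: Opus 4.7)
My plan is to verify the identity pointwise (a.e.) by slicing $Y^\varphi$ into horizontal strips indexed by $\lfloor u\rfloor$. Set $S_j=\{(y,u)\in Y^\varphi:j\le u<j+1\}$ for $j=0,1,\dots,[t]$ and $S_\infty=\{(y,u)\in Y^\varphi:[t]+1\le u\le\varphi(y)\}$. Since $\varphi\ge 2$, these sets partition $Y^\varphi$ modulo null sets. By construction $E_tv=1_{S_\infty}L_tv$, while $A_j\,1_{\tY}\,L_{t-j}v=1_{S_j}L_j(1_{\tY}L_{t-j}v)$. The proposition therefore reduces to
\[
1_{S_j}L_j(1_{\tY}L_{t-j}v)=1_{S_j}L_tv,\qquad 0\le j\le[t],
\]
which, using the semigroup law $L_t=L_jL_{t-j}$ and writing $w=L_{t-j}v$, boils down to the single identity $1_{S_j}L_jw=1_{S_j}L_j(1_{\tY}w)$.

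The core observation is thus that, on $S_j$, the operator $L_j$ only sees the restriction of $w$ to $\tY$. I would establish this by a direct preimage count: for $(y,u)\in S_j$, any $F_j$-preimage $(y',u')\in Y^\varphi$ with $y=F^ky'$ (i.e.\ with exactly $k$ roof crossings during the flow of time $j$) satisfies, after summing the crossing times,
\[
u'=\sum_{i=0}^{k-1}\varphi(F^iy')-j+u\quad\text{with}\quad u'\in[0,\varphi(y')).
\]
For $k=0$ this gives $(y',u')=(y,u-j)$, and since $u-j\in[0,1)$ this preimage lies in $\tY$. For $k\ge 1$ the constraint $u'<\varphi(y')$ becomes $\sum_{i=1}^{k-1}\varphi(F^iy')+u<j$, which is impossible because $u\ge j$ and $\varphi>0$. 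Hence $(y,u-j)$ is the unique preimage of $(y,u)$ under $F_j$, and it lies in $\tY$; consequently multiplying $w$ by $1_{\tY}$ does not alter $(L_jw)(y,u)$.

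Assembling the pieces and summing over $j$, I then get
\[
\sum_{j=0}^{[t]}A_j\,1_{\tY}\,L_{t-j}v+E_tv=\sum_{j=0}^{[t]}1_{S_j}L_tv+1_{S_\infty}L_tv=L_tv,
\]
which is the claimed identity. The only genuinely substantive step is the preimage count; once $u\ge j$ together with $\varphi\ge 2$ is seen to rule out any roof crossing for $F_j$-preimages landing in $S_j$, the remainder is partition-of-unity bookkeeping. Minor care is needed at the strip boundaries $u=j$ and at $u=\varphi(y)$, where the identification $(y,\varphi(y))\sim(Fy,0)$ is active, but these sit on a null set and do not affect the $L^1$-level equality.
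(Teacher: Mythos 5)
Your argument is correct and follows the same route as the paper: partition by the strips $S_j$, apply the semigroup property $L_t=L_jL_{t-j}$, and observe that on $S_j$ the operator $L_j$ is insensitive to cutting off by $1_{\tY}$. The paper compresses this last point into the one-line remark ``use that $A_n=A_n1_{\tY}$''; your explicit preimage count is precisely the justification of that identity and matches the intended reasoning.
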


\begin{proof}
For $y\in Y$, $u\in(0,\varphi(y))$,
\begin{align*}
(L_tv)(y,u) & =\sum_{j=0}^{[t]}1_{\{j\le u< j+1\}}(L_tv)(y,u)+
1_{\{[t]+1\le u\le\varphi(y)\}}(L_tv)(y,u)
\\
& =\sum_{j=0}^{[t]}(A_jL_{t-j}v)(y,u)+(E_tv)(y,u).
\end{align*}
Now use that $A_n=A_n1_\tY$.
\end{proof}

Next, define 
\begin{alignat*}{2}
& B_t:L^1(Y^\varphi)\to L^1(\tY), & \qquad &
B_tv=1_\tY L_t(1_{\Delta_t}v),
\\
& G_t:L^1(Y^\varphi)\to L^1(\tY), & \qquad &
G_tv =B_t(\omega(t)v),
\\
& H_t:L^1(Y^\varphi)\to L^1(\tY), 
& \qquad &
H_tv=1_\tY L_t(1_{\Delta'_t}v),
\end{alignat*}
for $t>0$, where
 \begin{align*}
 \Delta_t  & =\{(y,u)\in Y^\varphi: \varphi(y)-t\le u< \varphi(y)-t+1\}
 \\
\Delta'_t  &=\{(y,u)\in Y^\varphi: u< \varphi(y)-t\},
  \qquad  \omega(t)(y,u)  =\varphi(y)-u-t+1 .
 \end{align*}

\begin{prop} \label{prop:L}
$\BIG 1_\tY L_t=\int_0^t T_{t-\tau}B_\tau \,d\tau
+G_t+H_t$ for $t>0$.
\end{prop}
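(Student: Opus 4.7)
The plan is to combine the three terms on the right-hand side into a single application of $1_{\tY}L_t$ acting on $v$ multiplied by a weight, and then to verify that this weight equals $1$ pointwise on $Y^\varphi$.

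First, observe that for $(y,u)\in\Delta_\tau$ with $\tau\in[0,t]$, the semiflow acts by $F_\tau(y,u)=(Fy,u+\tau-\varphi(y))$, and the defining inequalities of $\Delta_\tau$ give $0\le u+\tau-\varphi(y)<1$. Hence $F_\tau(\Delta_\tau)\subset\tY$, and by duality $L_\tau(1_{\Delta_\tau}v)$ is supported in $\tY$, so the projector $1_{\tY}$ inside $T_{t-\tau}$ is redundant. Using the semigroup law $L_{t-\tau}L_\tau=L_t$, we get $T_{t-\tau}B_\tau v = 1_{\tY}L_{t-\tau}L_\tau(1_{\Delta_\tau}v) = 1_{\tY}L_t(1_{\Delta_\tau}v)$. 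By Fubini and linearity,
\[
\int_0^t T_{t-\tau}B_\tau v\,d\tau + G_t v + H_t v = 1_{\tY}L_t\Big[\Big({\SMALL\int}_0^t 1_{\Delta_\tau}\,d\tau + \omega(t)\,1_{\Delta_t} + 1_{\Delta_t'}\Big)v\Big].
\]
It therefore suffices to show that the bracketed multiplier equals $1$ on $Y^\varphi$.

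Set $\alpha=\varphi(y)-u\in(0,\varphi(y)]$. Then $(y,u)\in\Delta_\tau$ iff $\tau\in[\alpha,\alpha+1)$, so $\int_0^t 1_{\Delta_\tau}(y,u)\,d\tau=\Leb([\alpha,\alpha+1)\cap[0,t])$; also $(y,u)\in\Delta_t$ iff $\alpha\in(t-1,t]$, $(y,u)\in\Delta_t'$ iff $\alpha>t$, and $\omega(t)(y,u)=\alpha-t+1$. A three-case analysis then finishes the proof: if $\alpha>t$, the sum is $0+0+1=1$; if $\alpha\in(t-1,t]$, it is $(t-\alpha)+(\alpha-t+1)+0=1$; and if $0<\alpha\le t-1$, it is $1+0+0=1$.

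The only mildly delicate point is ensuring in the first step that the intermediate projector $1_{\tY}$ does not discard mass, which is precisely arranged by the definition of $\Delta_\tau$ as the slice of $Y^\varphi$ whose image at time $\tau$ lands in $\tY$; the remainder is the elementary partition-of-unity identity verified by the case analysis above.
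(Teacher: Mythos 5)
Your proof is correct and follows essentially the same route as the paper: both arguments reduce the identity to the semigroup relation $L_{t-\tau}L_\tau=L_t$ together with the pointwise identity $\int_0^t 1_{\Delta_\tau}\,d\tau=1-\omega(t)1_{\Delta_t}-1_{\Delta_t'}$, which you verify by a case analysis in $\alpha=\varphi(y)-u$ where the paper computes the integral directly. Your explicit justification that $L_\tau(1_{\Delta_\tau}v)$ is already supported in $\tY$ (so the intermediate projector can be dropped) is a point the paper leaves implicit, and is a welcome addition.
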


\begin{proof}
Let $y\in Y$, $u\in[0,\varphi(y)]$.  Then
\begin{align*}
\int_0^t 1_{\Delta_\tau}(y,u)\,d\tau
& = \int_0^t  1_{\{\varphi(y)-u\le \tau\le \varphi(y)-u+1\}}\,d\tau
\\ & =1_{\{t\ge \varphi(y)-u+1\}}+
1_{\{\varphi(y)-u\le t<\varphi(y)-u+1\}}(t-\varphi(y)+u)
\\ & =1-1_{\{t< \varphi(y)-u+1\}}+
1_{\{\varphi(y)-u\le t<\varphi(y)-u+1\}}(t-\varphi(y)+u)
\\ & =1-1_{\Delta_t'}(y,u)
+1_{\Delta_t}(y,u)(t-\varphi(y)+u-1).
\end{align*}
Hence
\(
\int_0^t 1_{\Delta_\tau}\,d\tau=1
-1_{\Delta_t}\omega(t) -1_{\Delta'_t}.
\)
It follows that
\begin{align*}
\int_0^t T_{t-\tau}B_\tau & v \,d\tau
 = 1_\tY\int_0^t L_{t-\tau}1_\tY B_\tau v \,d\tau
 = 1_\tY\int_0^t L_{t-\tau}B_\tau v \,d\tau
\\ & = 1_\tY\int_0^t L_{t-\tau}L_\tau (1_{\Delta_\tau} v) \,d\tau
= 1_\tY L_t\Big(\int_0^t 1_{\Delta_\tau} v \,d\tau\Big)
=1_\tY L_tv-G_tv-H_tv
\end{align*}

as required.
\end{proof}

We have already defined the Laplace transforms $\hL(s)$ and $\hT(s)$ for $s=a+ib$ with $a>0$.  Similarly, define
\begin{alignat*}{2}
\hB(s) & =\int_0^\infty e^{-st}B_t\,dt, \qquad &
\hE(s) & =\int_0^\infty e^{-st}E_t\,dt, 
\\
\hG(s) & =\int_0^\infty e^{-st}G_t\,dt, \qquad &
\hH(s) & =\int_0^\infty e^{-st}H_t\,dt.
\end{alignat*}
 Also, we define the discrete transform
\(
\BIG \hA(s)=\sum_{n=0}^\infty e^{-sn}A_n.
\)

\begin{cor} \label{cor:L}
$\hL(s)=\hA(s)\hT(s)\hB(s)
+\hA(s)\hG(s)
+\hA(s)\hH(s)
+\hE(s)$ for $a>0$.
\end{cor}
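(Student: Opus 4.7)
The plan is to substitute the decomposition of Proposition~\ref{prop:L=AT} into the definition of $\hL(s)$, apply Fubini to interchange the integral and the finite sum, and then substitute the decomposition of Proposition~\ref{prop:L} for $1_\tY L_{t-j}$ inside. Since $a=\Re s>0$ and the relevant operators are uniformly bounded on (say) $L^1$ for bounded time intervals, all the exchanges of summation and integration will be legitimate by absolute convergence, so I will carry them out without comment.

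Concretely, by Proposition~\ref{prop:L=AT},
\[
\hL(s)=\int_0^\infty e^{-st}\sum_{j=0}^{[t]}A_j 1_\tY L_{t-j}\,dt+\hE(s).
\]
The condition $j\le[t]$ is equivalent to $t\ge j$, so swapping the sum and integral and substituting $u=t-j$ gives
\[
\int_0^\infty e^{-st}\sum_{j=0}^{[t]}A_j 1_\tY L_{t-j}\,dt
=\sum_{j=0}^\infty A_j\int_j^\infty e^{-st}1_\tY L_{t-j}\,dt
=\hA(s)\int_0^\infty e^{-su}1_\tY L_u\,du.
\]
This factorisation, which replaces the continuous convolution with the $A_j$'s by the discrete Laplace transform $\hA(s)$, is the whole point of working with the decomposition in Proposition~\ref{prop:L=AT} first.

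Next, I would plug in Proposition~\ref{prop:L}:
\[
\int_0^\infty e^{-su}1_\tY L_u\,du
=\int_0^\infty e^{-su}\!\int_0^u T_{u-\tau}B_\tau\,d\tau\,du+\hG(s)+\hH(s).
\]
The first term is a standard Laplace convolution: swapping the order of integration and substituting $v=u-\tau$ produces $\hT(s)\hB(s)$. Combining and distributing $\hA(s)$ gives the claimed formula.

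There is really no hard step here; the only thing to watch is that when $j=[t]$ the remaining time $t-j$ may be zero or very small, so one should check that the application of Proposition~\ref{prop:L} on the set $\{t-j=0\}$ contributes nothing to the integral (which is automatic, since that is a measure-zero set in $t$). The convergence of all the integrals and the interchange of sum/integral are ensured by the factor $e^{-at}$ with $a>0$ together with the boundedness in $t$ of the operators on their respective $L^1$ spaces, so these are routine.
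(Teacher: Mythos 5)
Your proposal is correct and follows essentially the same route as the paper: substitute Proposition~\ref{prop:L=AT}, interchange sum and integral to factor out $\hA(s)$ acting on $1_\tY\hL(s)$, and then take the Laplace transform of Proposition~\ref{prop:L}, using the standard convolution identity to obtain $\hT(s)\hB(s)+\hG(s)+\hH(s)$. The justification of the interchanges via $a>0$ and the measure-zero remark at $t=j$ are fine and add nothing beyond the paper's argument.
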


\begin{proof}
By Proposition~\ref{prop:L=AT},
\begin{align*}
\hL(s)-\hE(s) & =\int_0^\infty e^{-st} \sum_{j=0}^{[t]}
A_j1_\tY L_{t-j}\,dt
= \sum_{j=0}^\infty e^{-sj}A_j1_\tY  \int_{j}^\infty e^{-s(t-j)}L_{t-j}\,dt
\\ & = \hA(s) 1_\tY \int_{0}^\infty e^{-st}  L_{t}\,dt
 =\hA(s)1_\tY \hL(s).
\end{align*}
Hence $\hL=\hA1_\tY \hL+\hE$.
In addition, by Proposition~\ref{prop:L},
$\BIG 1_\tY \hL=\hT\hB +\hG+\hH$.
\end{proof}

\begin{prop} \label{prop:AEH}
Let $\delta>\eps>0$.  
Then there is a constant $C>0$ such that 
\begin{itemize}
\item[(a)]
${\|\hA(s)\|}_{\cF_\eta(\tY)\to\cF_{\delta,\eta}(Y^\varphi)}\le 1$,
\item[(b)]
${\|\hE(s)\|}_{\cF_{\delta,\eta}(Y^\varphi)\to\cF_{\delta,\eta}(Y^\varphi)}\le C$,
\item[(c)]
${\|\hH(s)\|}_{\cF_{\delta,\eta}(Y^\varphi)\to\cF_\eta(\tY)}\le e^\delta$,
\end{itemize}
for $|a| \le \eps$.
\end{prop}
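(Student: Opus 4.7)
The plan is to prove each of the three bounds by first showing that the operators $A_n$, $E_t$, and $H_t$ act on their arguments by \emph{pure translation in the flow direction} (with no crossings of the ceiling $\varphi$), so that one obtains completely explicit formulas for the Laplace transforms. Once those formulas are in hand, the desired norm estimates are direct calculations using the $\delta$-weights to control the factor $e^{-st}$ when $|a|\le\eps<\delta$.

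First I would verify the no-crossing geometry. For $A_n$: a preimage $(y',u')\in\tY$ (so $u'\in[0,1]$) of $(y,u)$ with $n\le u<n+1$ under $F_n$, involving $k$ ceiling crossings, satisfies $n=\sum_{j=0}^{k-1}\varphi(F^jy')-u'+u$, and since $\varphi\ge 2$ forces $\sum_{j=0}^{k-1}\varphi(F^jy')\ge 2k$ while $n-u+u'\in[-1,1]$, the only possibility is $k=0$ with unique preimage $(y,u-n)$ and Jacobian $1$. For $E_t$: the condition $[t]+1\le u\le\varphi(y)$ forces $u-t\in[0,\varphi(y)-t]$, so the preimage $(y,u-t)$ stays in the fiber. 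For $H_t$: the set $\Delta_t'=\{u<\varphi(y)-t\}$ is defined precisely so that $F_t$ translates points in it without crossing, and intersecting the image with $\tY$ restricts $u\in[0,1)$ and $u\ge t$, so $H_t v$ vanishes for $t\ge 1$. These observations yield
\[
(\hA(s)v)(y,u)=e^{-s[u]}v(y,u-[u]),\qquad (\hH(s)v)(y,u)=\int_0^u e^{-st}v(y,u-t)\,dt,
\]
for $(y,u)\in Y^\varphi$ and $(y,u)\in\tY$ respectively, while $(\hE(s)v)(y,u)$ is the analogous integral of $e^{-st}v(y,u-t)$ over $\{t\ge 0:[t]+1\le u\}\cap[0,\infty)$.

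Next I would derive the bounds. For (a), $|(\hA(s)v)(y,u)|\le e^{|a|[u]}|v(y,u-[u])|\le e^{|a|u}|v|_\infty$, and multiplying by $e^{-\delta u}$ gives $e^{(|a|-\delta)u}|v|_\infty\le |v|_\infty$ since $|a|\le\eps<\delta$; the Hölder piece proceeds identically because the prefactor $e^{-s[u]}$ is identical for $(y,u),(y',u)\in Y_j^\varphi$. For (b), insert $|v(y,u-t)|\le e^{\delta(u-t)}|v|_{\delta,\infty}$ and factor out $e^{\delta u}$ to obtain an integrand bounded by $e^{(|a|-\delta)t}$; integrating against the indicator $1_{\{[t]+1\le u\}}$ gives a finite constant $C=C(\delta,\eps)$, uniformly in $u$, and the Hölder estimate is identical once one notes that the indicator $1_{\{[t]+1\le u\le\varphi(y)\}}$ agrees with $1_{\{[t]+1\le u\le\varphi(y')\}}$ whenever $u\le\min(\varphi(y),\varphi(y'))$, which is automatic for $(y,u),(y',u)\in Y_j^\varphi$. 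For (c), using $u\in[0,1]$, the same substitution gives $|\hH(s)v(y,u)|\le e^{\delta u}|v|_{\delta,\infty}\int_0^u e^{(|a|-\delta)(u-t)}\,dt\le e^{\delta u}u|v|_{\delta,\infty}\le e^\delta|v|_{\delta,\infty}$, and once more the Hölder seminorm follows by the same manipulation applied to $v(y,u-t)-v(y',u-t)$.

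The main potential obstacle is the no-crossing argument for $\hA(s)$, which is the least transparent of the three: one must check that the constraint $u\in[n,n+1)$ together with $u'\in[0,1]$ and $\varphi\ge 2$ leaves no room for intermediate crossings, ruling out the proliferation of preimages that would otherwise require a much more delicate Gibbs–Markov style estimate. All remaining difficulties are bookkeeping, particularly ensuring that the tight constant $\le 1$ in (a) and $e^\delta$ in (c) emerge; both rely on the simple identity $e^{|a|u}e^{-\delta u}\le 1$ valid for $u\ge 0$ and $|a|\le\eps<\delta$.
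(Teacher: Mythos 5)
Your proposal is correct and follows essentially the same approach as the paper's proof: all three operators are shown to act by pure translation in the flow direction, and the resulting explicit Laplace-transform formulas are bounded directly by pulling out the $\delta$-weight and using $|a|\le\eps<\delta$. The paper simply states the translation formulas $(A_nv)(y,u)=1_{\{n\le u<n+1\}}v(y,u-n)$, $(E_tv)(y,u)=1_{\{[t]+1\le u\}}v(y,u-t)$, $(H_tv)(y,u)=1_{\{t<u\}}v(y,u-t)$ without comment (they follow from the preimage formula for $L_t$ in Proposition~\ref{prop:Lt}); your no-crossing arguments supply exactly the justification for those formulas.
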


\begin{proof}
(a)  Let $v\in \cF_\eta(\tY)$.
Let $(y,u),\,(y',u)\in Y_j^\varphi$, $j\ge1$.
Since
$(A_nv)(y,u)=1_{\{n\le u< n+1\}}v(y,u-n)$,
\[
(\hA(s)v)(y,u) =\sum_{n=0}^\infty e^{-sn} 1_{\{n\le u< n+1\}}v(y,u-n)
=e^{-s[u] } v(y,u-[u]).
\]
Hence
\[
|(\hA(s)v)(y,u)|\le e^{\eps u}|v|_{\infty},  \quad
|(\hA(s)v)(y,u)-(\hA(s)v)(y',u)|
\le e^{\eps u}|v|_{\eta}\,d(y,y')^\eta .
\]
That is, $|\hA(s)v|_{\eps,\infty}\le |v|_\infty$,
$|\hA(s)v|_{\eps,\eta} \le |v|_\eta$.
Hence $\|\hA(s)v\|_{\delta,\eta}\le
\|\hA(s)v\|_{\eps,\eta}\le\|v\|_\eta$.
\\[.75ex]
(b) We take $C=1/(\delta-\eps)$.
Let $v\in \cF_{\delta,\eta}(Y^\varphi)$.
Let $(y,u),\,(y',u)\in Y_j^\varphi$, $j\ge1$.
Note that
$(E_tv)(y,u)=1_{\{[t]+1\le u\}}v(y,u-t)$, so
\[
(\hE(s)v)(y,u) =\int_0^\infty e^{-st} 1_{\{[t]+1\le u\}}v(y,u-t)\,dt.
\]
Hence
\[
|(\hE(s)v)(y,u)|\le \int_0^\infty e^{\eps t}|v|_{\delta,\infty} \,e^{\delta(u-t)}\,dt
= C |v|_{\delta,\infty} \,e^{\delta u},
\]
and
\[
|(\hE(s)v)(y,u)-(\hE(s)v)(y',u)|
\le \int_0^\infty e^{\eps t}|v|_{\delta,\eta}\,d(y,y')^\eta e^{\delta(u-t)}\,dt
= C e^{\delta u}|v|_{\delta,\eta} \,d(y,y')^\eta.
\]
That is, $|\hE(s)v|_{\delta,\infty}\le |v|_{\delta,\infty}$ and
$|\hE(s)v|_{\delta,\eta} \le |v|_{\delta,\eta}$.
\\[.75ex]
(c)
Let $v\in \cF_{\eps,\eta}(Y^\varphi)$.
Let $(y,u),\,(y',u)\in\tY_j$, $j\ge1$.  Then
$(H_t v)(y,u) = 1_{\{t<u\}}v(y,u-t)$
and $(\hH(s)v)(y,u)=\int_0^u e^{-st}v(y,u-t)\,dt$.
Hence,
\begin{align*}
|\hH(s)v|_\infty\le e^\delta |v|_{\delta,\infty}
\quad\text{and}\quad
|(\hH(s)v)(y,u)-(\hH(s)v)(y',u)|\le e^\delta |v|_{\delta,\eta}\,d(y,y')^\eta.
\end{align*}
The result follows.
\end{proof}

\begin{prop} \label{prop:BG}
There exists $\delta>\eps>0$, $C>0$ such that
\[
{\|\hB(s)\|}_{\cF_{\delta,\eta}(Y^\varphi)\to\cF_\eta(\tY)}\le C|s|
\quad\text{and}\quad {\|\hG(s)\|}_{\cF_{\delta,\eta}(Y^\varphi)\to\cF_\eta(\tY)}\le C|s|
\quad\text{for $|a| \le \eps$.}
\]
\end{prop}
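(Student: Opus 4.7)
I plan to follow the template used for Proposition~\ref{prop:U}, with the Gibbs--Markov sum over preimages $\{y_j\}$ of $F$ playing the role that $\tR$ plays there.

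First, I would unfold $B_tv$ at a point $(y,u)\in\tY$. Because $\Delta_t$ is a slab of $u$-width one just below the tower top and the roof $\varphi\ge 2$, the only preimages of $(y,u)$ under $F_t$ which can fall inside $\Delta_t$ are those obtained from a single tower crossing. This gives the explicit formula
\[
(B_tv)(y,u)=\sumj g(y_j)\,v(y_j,u+\varphi(y_j)-t)\,1_{\{u<t\le u+\varphi(y_j)\}}.
\]
A short computation shows that $\omega(t)$ evaluated at the preimage $(y_j,u+\varphi(y_j)-t)\in\Delta_t$ equals $1-u$, independent of both $t$ and $j$. Hence $G_tv=(1-u)B_tv$ on $\tY$, so $\hG(s)=(1-u)\hB(s)$, and since $0\le 1-u\le 1$ on $\tY$ the bound for $\hG(s)$ reduces to the bound for $\hB(s)$.

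Next, taking Laplace transforms and substituting $\tau=u+\varphi(y_j)-t$ in the resulting integral gives
\[
(\hB(s)v)(y,u)=\sumj g(y_j)\int_0^{\varphi(y_j)}e^{s(\tau-u-\varphi(y_j))}v(y_j,\tau)\,d\tau.
\]
Writing the summand as $W_j(s)(y,u)$, the task reduces to bounding $|1_{Y_j}W_j(s)|_\infty$ and the H\"older-in-$y$ difference $W_j(y,u)-W_j(y',u)$ for $y,y'\in Y_j$, then summing over $j$ using the Gibbs--Markov bound~\eqref{eq:GM} on $g$ and the exponential tail~\eqref{eq:exp} on $\varphi$. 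The H\"older difference is split as in the proof of Proposition~\ref{prop:U} into an exponent-difference piece and a function-difference piece: the first produces an explicit factor $|s|\,d(Fy,Fy')^\eta$ via~\eqref{eq:phi}, and the second is handled using $|v(y_j,\tau)-v(y_j',\tau)|\le|v|_{\delta,\eta}\,e^{\delta\tau}\,d(y_j,y_j')^\eta$.

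To upgrade the remaining pieces to carry a factor $|s|$, I would use the mean-value identity
\[
e^{s(\tau-u-\varphi(y_j))}-1=s\int_0^1 e^{s'(\tau-u-\varphi(y_j))}(\tau-u-\varphi(y_j))\,ds',
\]
which pairs $|s|$ with $|\tau-u-\varphi(y_j)|\le\varphi(y_j)+1$. The extra weight $\varphi(y_j)$ is then absorbed by the exponential tail~\eqref{eq:exp} after slightly shrinking $\eps$ so that $\delta+2\eps<\delta_0$. Combining the exponent-difference H\"older estimate with this sup-level expansion and summing the $g(y_j)$ factor via~\eqref{eq:GM} and~\eqref{eq:exp} yields $\|\hB(s)v\|_\eta\le C|s|\|v\|_{\delta,\eta}$ as required.

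The main obstacle is precisely this uniform extraction of $|s|$: the H\"older exponent-difference piece yields $|s|$ for free from~\eqref{eq:phi}, but producing it in the sup bound and in the function-difference piece requires the mean-value expansion above, together with careful control of the residual constant term. The choice $\delta>\eps>0$ in the conclusion comes from the need to absorb both the growth of the $\varphi(y_j)$-weight against~\eqref{eq:exp} and compatibility with the range of $\eps$ coming from Lemma~\ref{lem:main} and Proposition~\ref{prop:U}; tracking these constants through the $I/J$ decomposition is the delicate bookkeeping step of the argument.
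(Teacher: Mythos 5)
Your route is essentially the paper's: the pointwise unfolding $(B_tv)(y,u)=\sumj g(y_j)1_{\{u<t<u+\varphi(y_j)\}}v(y_j,u+\varphi(y_j)-t)$ (the paper obtains the same formula, $B_tv=\tR v_t$, by duality rather than by a lap-number argument), the identity $G_tv=(1-u)B_tv$ reducing $\hG$ to $\hB$, the same expression for the Laplace-transformed kernel $\hV(s)$, the same splitting of the H\"older difference into an exponent-difference piece (which carries $|s|$ via~\eqref{eq:phi}) and a function-difference piece, and summation over $j$ using~\eqref{eq:GM} and~\eqref{eq:exp} after shrinking $\delta$. Up to that point the proposal is correct and matches the paper.

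The extra step you insert --- upgrading the sup-norm term and the function-difference term to carry a factor $|s|$ via a mean-value expansion --- cannot work, and this is the one genuine flaw. (As written the identity is also off: it should read $e^{sx}-1=s\int_0^1 xe^{\sigma sx}\,d\sigma$, with the dummy variable multiplying $s$ inside the exponent.) More importantly, the ``residual constant term'' left over after extracting $s$ is the operator obtained by replacing the exponential by $1$, i.e. essentially $\hB(0)$, which is a fixed nonzero operator: by Proposition~\ref{prop:zero}, $\int_\tY\hB(0)v\,d\tmu=\bar\varphi\int_{Y^\varphi}v\,d\mu^\varphi\neq0$ whenever $v$ has nonzero mean. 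Consequently no argument can yield $\|\hB(s)\|\le C|s|$ uniformly down to $s=0$; what your estimates (and the paper's own, whose $J_2$, $J_3$ and sup-norm bounds likewise carry no factor $|s|$) actually give is $\|\hB(s)\|\le C\max(1,|s|)$, and the same for $\hG$ and, incidentally, for $\hU$ in Proposition~\ref{prop:U}. This weaker bound is all that is used: analyticity plus boundedness suffices near $s=0$, and the factor $|s|$ is only invoked in Lemma~\ref{lem:hatL}(c) for $|b|\ge1$, where $\max(1,|s|)\ll|s|$. So drop the mean-value ``upgrade'' and the attendant bookkeeping with $\delta+2\eps<\delta_0$, keep your estimates as they stand, and interpret the stated bound in the regime where it is needed.
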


\begin{proof}
Let $v\in L^1(Y^\varphi)$, $w\in L^\infty(\tY)$.
Using that $F_t(y,u)=(Fy,u+t-\varphi(y))$ for
$(y,u)\in\Delta_t$,
\begin{align*}
\int_\tY B_tv\,w\,d\tmu &
=\bar\varphi \int_{Y^\varphi}L_t(1_{\Delta_t}v)\,w\,d\mu^\varphi
=\bar\varphi \int_{Y^\varphi}1_{\Delta_t}v\,w\circ F_t\,d\mu^\varphi
\\
& = \int_Y\int_0^{\varphi(y)}1_{\{0\le u+t-\varphi(y)<1\}}v(y,u)w(Fy,u+t-\varphi)\,du\,d\mu
\\
& = \int_Y \int_{t-\varphi(y)}^t 1_{\{0\le u<1\}}v(y,u+\varphi(y)-t)w(Fy,u)\,du\,d\mu
\\
& = \int_\tY v_t\,w\circ \tF\,d\tmu
 = \int_\tY \tR v_t\,w\,d\tmu
\end{align*}
where $v_t(y,u)=1_{\{0<u+\varphi(y)-t<\varphi(y)\}}v(y,u+\varphi(y)-t)$.

Hence $B_tv=\tR v_t$ and 
it follows immediately that $G_tv=\tR(\omega(t) v)_t$.
But
\[
(\omega(t) v)_t(y,u)=1_{\{0<u+\varphi(y)-t<\varphi(y)\}}(\omega(t)v)(y,u+\varphi(y)-t)
=(1-u)v_t(y,u),
\]
so $(G_tv)(y,u)=(1-u)(B_tv)(y,u)$.

Next, 
$\hB(s)v=\tR \hV(s)$ where
\begin{align*}
\hV(s)(y,u)  = \int_0^\infty e^{-st}v_t(y,u)\,dt
& = \int_u^{u+\varphi(y)} e^{-st}v(y,u+\varphi(y)-t)\,dt
\\ & =\int_0^{\varphi(y)}e^{-s(\varphi(y)+u-t)}v(y,t)\,dt.
\end{align*}
It is immediate that
\begin{align} \label{eq:BG}
(\hG(s)v)(y,u)=(1-u)(\hB(s)v)(y,u).
\end{align}

Suppose that $\delta>\eps>0$ are fixed.
Let $v\in \cF_{\delta,\eta}(Y^\varphi)$.
Let $(y,u),\,(y',u)\in\tY_j$, $j\ge1$.  Then
\[
|\hV(s)(y,u)|  \le \int_0^{\varphi(y)}e^{-a \,(\varphi(y)+u-t)}|v|_{\delta,\infty}\,e^{\delta t}\,dt
 \ll  e^{\delta\varphi(y)}|v|_{\delta,\infty}
\]
and so $|1_{Y_j}\hV(s)|_\infty\ll e^{\delta|1_{Y_j}\varphi|_\infty}|v|_{\delta,\infty}$.

Next, suppose without loss that $\varphi(y')\le \varphi(y)$.
Then
\[
\hV(s)(y,u)-\hV(s)(y',u)  = J_1+J_2+J_3
\]
where
\begin{align*}
 J_1 & = \int_0^{\varphi(y)}(e^{-s(\varphi(y)+u-t)}-e^{-s(\varphi(y')+u-t)})v(y,t)\,dt, \\
 J_2 & = \int_0^{\varphi(y)}e^{-s(\varphi(y')+u-t)}(v(y,t)-v(y',t))\,dt,  \\
 J_3 & = \int_{\varphi(y')}^{\varphi(y)}e^{-s(\varphi(y')+u-t)}v(y',t)\,dt.
\end{align*}
For notational convenience we suppose that $a\in(-\eps,0)$ since the range $a\ge0$ is simpler. Using~\eqref{eq:phi},
\begin{align*}
|J_1| & \le \int_0^{\varphi(y)} e^{\eps(|1_{Y_j}\varphi|_\infty+1-t)}
|s||\varphi(y)-\varphi(y')||v|_{\delta,\infty}\,e^{\delta t}\,dt \\
 & \ll |s|\varphi(y) e^{\delta|1_{Y_j}\varphi|_\infty}\,d(Fy,Fy')^\eta |v|_{\delta,\infty}
  \ll |s|e^{2\delta|1_{Y_j}\varphi|_\infty}\,d(Fy,Fy')^\eta |v|_{\delta,\infty},
\\[.75ex]
|J_2| & \le \int_0^{\varphi(y)} e^{\eps(|1_{Y_j}\varphi|_\infty+1-t)}
|v|_{\delta,\eta}\,e^{\delta t} d(y,y')^\eta \,dt
  \ll e^{\delta|1_{Y_j}\varphi|_\infty}\,d(y,y')^\eta |v|_{\delta,\eta},
\\[.75ex]
|J_3| & \le   \int_{\varphi(y')}^{\varphi(y)} e^{\eps (|1_{Y_j}\varphi|_\infty+1-t)}
|v|_{\delta,\infty}\,e^{\delta t} \,dt
  \ll e^{2\delta|1_{Y_j}\varphi|_\infty}|v|_{\delta,\infty}\,
d(Fy,Fy')^\eta.
\end{align*}
Hence
\[
|\hV(s)(y,u)-\hV(s)(y,u)|\ll |s|e^{2\delta|1_{Y_j}\varphi|_\infty}\|v\|_{\delta,\eta} \,d(Fy,Fy')^\eta.
\]

Now, for
$(y,u)\in \tY$,
\[
(\hB(s)v)(y,u)=(\tR\hV(s))(y,u)=\sumj g(y_j)\hV(s)(y_j,u),
\]
where $y_j$ is the unique preimage of $y$ under $F|Y_j$.
It follows from the estimates for $\hV(s)$ together with~\eqref{eq:GM} 
that 
\[
\|\hB(s)v\|_\eta \ll |s|\sumj \mu(Y_j)e^{2\delta|1_{Y_j}\varphi|_\infty}
\|v\|_{\delta,\eta}.
\]
Shrinking $\delta$, the desired estimate for $\hB$ follows from~\eqref{eq:exp}.
Finally, the estimate for $\hG$ follows from~\eqref{eq:BG}.
\end{proof}

\begin{prop} \label{prop:zero}
$\int_\tY\hB(0)v\,d\tmu=\bar\varphi\int_{Y^\varphi}v\,d\mu^\varphi$
for $v\in L^1(Y^\varphi)$.
\end{prop}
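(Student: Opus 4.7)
\begin{pfof}{plan}
The plan is to set $w \equiv 1$ in the duality identity already derived in the proof of Proposition~\ref{prop:BG} and then integrate in $t$. Since $L_t$ preserves the integral against the invariant measure $\mu^\varphi$, taking $w=1$ in
\[
\int_\tY B_tv\,w\,d\tmu=\bar\varphi \int_{Y^\varphi}L_t(1_{\Delta_t}v)\,w\,d\mu^\varphi
\]
immediately collapses the right-hand side to $\bar\varphi \int_{Y^\varphi}1_{\Delta_t}v\,d\mu^\varphi$, giving
\[
\int_\tY B_tv\,d\tmu=\bar\varphi\int_{Y^\varphi}1_{\Delta_t}v\,d\mu^\varphi.
\]

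Next I would integrate in $t$ over $[0,\infty)$. Provided $v\in L^1(Y^\varphi)$, Fubini applies (the integrand is nonnegative for $v\ge0$ and the general case follows by splitting into positive and negative parts), yielding
\[
\int_\tY \hB(0)v\,d\tmu=\bar\varphi\int_{Y^\varphi}v(y,u)\Big(\int_0^\infty 1_{\Delta_t}(y,u)\,dt\Big)d\mu^\varphi.
\]
The final step is the elementary observation that, from the definition $\Delta_t=\{(y,u):\varphi(y)-t\le u<\varphi(y)-t+1\}$, the condition $(y,u)\in\Delta_t$ is equivalent to $t\in[\varphi(y)-u,\,\varphi(y)-u+1)$. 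Since $0\le u\le\varphi(y)$, this is an interval of length $1$ contained in $[0,\infty)$, so the inner integral equals $1$ for every $(y,u)\in Y^\varphi$. Substituting gives the claimed identity.

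There is no real obstacle here: the two nontrivial ingredients — the duality computation for $B_t$ and the integrability provided by the tails — are already in place from Proposition~\ref{prop:BG} and~\eqref{eq:exp}. The only mild care needed is the justification of Fubini, which is handled by the standard decomposition of $v$ into its positive and negative parts together with the bound $\int_0^\infty 1_{\Delta_t}\,dt=1$, ensuring the absolute double integral is $\bar\varphi\|v\|_{L^1(\mu^\varphi)}<\infty$.
\end{pfof}
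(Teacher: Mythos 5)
Your proof is correct and follows essentially the same route as the paper: both arguments reduce to the identity $\int_\tY B_tv\,d\tmu = \bar\varphi\int_{Y^\varphi}1_{\Delta_t}v\,d\mu^\varphi$ (using that $L_t$ preserves $\mu^\varphi$-integrals and that $B_tv$ is supported on $\tY$ where $\tmu=\bar\varphi\mu^\varphi$), followed by Fubini and the elementary fact that $\int_0^\infty 1_{\Delta_t}(y,u)\,dt=1$. The only cosmetic difference is that you invoke this via the duality computation from the proof of Proposition~\ref{prop:BG} with $w\equiv1$, while the paper runs the computation directly from the definition of $\hB(0)$; note also that the exponential tail condition~\eqref{eq:exp} is not actually needed here --- $v\in L^1(\mu^\varphi)$ alone suffices, as your Fubini bound at the end already shows.
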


\begin{proof}
By the definition of $\hB$,
\begin{align*}
\int_\tY & \hB(0)v\,d\tmu =
\int_\tY\int_0^\infty L_t(1_{\Delta_t}v)\,dt\,d\tmu=
\bar\varphi \int_0^\infty \int_{Y^\varphi} L_t(1_{\Delta_t}v)\,d\mu^\varphi\,dt
\\ & =
\bar\varphi\int_0^\infty\int_{Y^\varphi} 1_{\Delta_t}v\,d\mu^\varphi\,dt
 =\bar\varphi\int_{Y^\varphi}\int_0^\infty 1_{\{\varphi-u<t<\varphi-u+1\}}v\,dt\,d\mu^\varphi
 =\bar\varphi\int_{Y^\varphi}v\,d\mu^\varphi,
\end{align*}
as required.
\end{proof}

\begin{lemma} \label{lem:hatL}
Write $s=a+ib\in\C$.
There exists $\eps>0$, $\delta>0$, $m_2\ge0$, $C>0$ such that 
\begin{itemize}
\item[(a)] $s\mapsto \hL(s):\cF_{\delta,\eta}^0(Y^\varphi)\to\cF_{\delta,\eta}(Y^\varphi)$ is analytic on $\{|a|< \eps\}$;
\item[(b)] $s\mapsto \hL(s):\cF_{\delta,\eta}(Y^\varphi)\to\cF_{\delta,\eta}(Y^\varphi)$ is analytic on $\{|a|< \eps\}$ except for a simple pole at $s=0$;
\item[(c)]
$\|\hL(s)v\|_{\delta,\eta}
\le C|b|^{m_2}\|v\|_{\delta,\eta}$
for $|a|\le\eps$, $|b|\ge1$, $v\in \cF_{\delta,\eta}(Y^\varphi)$.
\end{itemize}
\end{lemma}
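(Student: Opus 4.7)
The plan is to combine Corollary~\ref{cor:L} with Lemma~\ref{lem:main} and Propositions~\ref{prop:U}--\ref{prop:zero}. The decomposition
\[
\hL(s)=\hA(s)\hT(s)\hB(s)+\hA(s)\hG(s)+\hA(s)\hH(s)+\hE(s)
\]
reduces the lemma to analysis of its four summands. First, I would observe that each of $\hA(s)$, $\hB(s)$, $\hE(s)$, $\hG(s)$, $\hH(s)$ is analytic on $\{|a|<\eps\}$ as a bounded operator between the relevant spaces: this is immediate from the explicit integral/series representations derived in the proofs of Propositions~\ref{prop:AEH} and~\ref{prop:BG}, which converge with estimates uniform in $s$ on that strip. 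Combined with analyticity of $\hU(s)$ (Proposition~\ref{prop:U}) and of $(I-\hR(s))^{-1}$ away from the origin (Lemma~\ref{lem:main}(b)), every factor in every summand is analytic on $\{|a|<\eps\}\setminus\{0\}$.

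For part~(c), I would just multiply the available norm bounds. For $|a|\le\eps$ and $|b|\ge 1$, Lemma~\ref{lem:main}(c) gives ${\|(I-\hR(s))^{-1}\|}_{\cF_\eta(\tY)}\ll|b|^{m_1}$; Proposition~\ref{prop:U} gives $\|\hU(s)\|\ll|s|\ll|b|$; and Propositions~\ref{prop:AEH},~\ref{prop:BG} give $\|\hA(s)\|\le 1$, $\|\hB(s)\|,\|\hG(s)\|\ll|b|$, $\|\hH(s)\|,\|\hE(s)\|=O(1)$. Composing yields $\|\hA(s)\hT(s)\hB(s)\|\ll|b|^{m_1+2}$, with the remaining summands $O(|b|)$ or smaller, proving part~(c) with $m_2=m_1+2$.

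The heart of the argument is the treatment of $s=0$, where Proposition~\ref{prop:zero} becomes essential. Since $\hB(s)$ is analytic, write $\hB(s)v=\hB(0)v+s\hB_1(s)v$ with $\hB_1$ analytic on $\{|a|<\eps\}$, so that
\[
(I-\hR(s))^{-1}\hB(s)v=(I-\hR(s))^{-1}\hB(0)v+s(I-\hR(s))^{-1}\hB_1(s)v.
\]
The second term is analytic on $\{|a|<\eps\}$ because the prefactor $s$ cancels the simple pole of $(I-\hR)^{-1}$. Hence the pole structure of $\hL(s)v$ at $s=0$ is entirely controlled by $(I-\hR(s))^{-1}\hB(0)v$. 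For general $v\in\cF_{\delta,\eta}(Y^\varphi)$ this term inherits at most a simple pole from Lemma~\ref{lem:main}(b), which together with analyticity of the remaining factors and summands proves part~(b). For $v\in\cF_{\delta,\eta}^0(Y^\varphi)$, Proposition~\ref{prop:zero} gives $\int_{\tY}\hB(0)v\,d\tmu=\bar\varphi\int_{Y^\varphi}v\,d\mu^\varphi=0$, so $\hB(0)v\in\cF_\eta^0(\tY)$, and Lemma~\ref{lem:main}(a) then makes the first term analytic as well, yielding part~(a). The main obstacle I expect is precisely this bookkeeping at $s=0$: Proposition~\ref{prop:zero} is the indispensable bridge that transfers the mean-zero condition on $Y^\varphi$ to a mean-zero condition on $\tY$, where Lemma~\ref{lem:main}(a) can be applied.
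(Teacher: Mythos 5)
Your proposal is correct and follows essentially the same path as the paper: plug the decomposition from Corollary~\ref{cor:L} together with the renewal equation $\hT=\hU(I-\hR)^{-1}$ into Lemma~\ref{lem:main} and Propositions~\ref{prop:U}--\ref{prop:zero}, with Proposition~\ref{prop:zero} supplying the mean-zero condition on $\tY$ that kills the simple pole. Your expansion $\hB(s)=\hB(0)+s\hB_1(s)$ just spells out more explicitly why only $\hB(0)$ matters for the residue at $s=0$; the paper states this compactly as ``the simple pole disappears on restriction.''
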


\begin{proof}  
Recall that
\[
\hL=\hA\hT\hB+\hA\hG+\hA\hH+\hE, \qquad \hT=\hU(I-\hR)^{-1}
\]
where $\hU$, $\hA$, $\hB$, $\hG$, $\hH$ and $\hE$ are analytic by
Propositions~\ref{prop:U},~\ref{prop:AEH} and~\ref{prop:BG}.
Hence part~(b) follows immediately from Lemma~\ref{lem:main}(b).
Also, part~(c) follows using Lemma~\ref{lem:main}(c).

By Proposition~\ref{prop:zero},
$\hB(0)(\cF_{\delta,\eta}^0(Y^\varphi))\subset \cF_\eta^0(\tY)$.
Hence the simple pole at $s=0$ for $(I-\hR)^{-1}\hB$ disappears on restriction
to $\cF_{\delta,\eta}^0(Y^\varphi)$
by Lemma~\ref{lem:main}(a).  This proves part~(a).
\end{proof}

\subsection{Moving the contour of integration}
\label{sec:contour}

\begin{prop} \label{prop:partial}
Let $m\ge1$. 
Let $v\in \cF_{\delta,\eta,m}(Y^{\varphi})$ with good support. Then
$\hL(s)v=\sum_{j=0}^{m-1} (-1)^js^{-(j+1)}\partial_t^j v+(-1)^ms^{-m}\hL(s)\partial_t^mv$
for $a >0$.
\end{prop}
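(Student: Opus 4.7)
The plan is to prove the identity by induction on $m$, with the base case $m=1$ reducing to a single integration by parts after establishing the key commutation $L_t(\partial_tv)=-\partial_t(L_tv)$. To verify this commutation, I would use that since $v$ has good support with parameter $r>0$, for $0\le h<r$ the translate $v\circ F_h$ is merely the vertical shift $(y,u)\mapsto v(y,u+h)$ (no identification is triggered). Combining $F_h$-invariance of $\mu^\varphi$ with the semiflow property then yields
\[
L_t(v\circ F_h)=L_{t-h}v\quad\text{for }0\le h\le t,
\]
and dividing by $h$ and letting $h\to 0^+$ gives $L_t(\partial_tv)=-\frac{d}{dt}L_tv$, where $\partial_tv=\partial_uv$ by good support.

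Next, for $a=\Re s>0$ I would Laplace transform this commutation and integrate by parts in $t$:
\[
\hL(s)\partial_tv=-\int_0^\infty e^{-st}\frac{d}{dt}(L_tv)\,dt=L_0v-s\hL(s)v=v-s\hL(s)v,
\]
using $L_0v=v$ and the fact that $|L_tv|_\infty\le|v|_\infty$ (so $e^{-at}L_tv\to 0$ as $t\to\infty$ since $a>0$) to kill the boundary term at infinity. Rearranging produces the base identity
\[
\hL(s)v=s^{-1}v-s^{-1}\hL(s)\partial_tv.
\]

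For the inductive step I would observe that $\partial_t^jv$ inherits good support with the same parameter $r$ and belongs to $\cF_{\delta,\eta,m-j}$, so the base identity may be applied to $\partial_t^{m-1}v$, giving $\hL(s)\partial_t^{m-1}v=s^{-1}\partial_t^{m-1}v-s^{-1}\hL(s)\partial_t^mv$. Substituting this into the level-$(m-1)$ formula after multiplication by $(-1)^{m-1}s^{-(m-1)}$ advances the induction to level $m$.

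The principal obstacle is making the commutation and the integration by parts rigorous near the identification $(y,\varphi(y))\sim(Fy,0)$, where a naive vertical shift could create spurious jumps and where the one-sided derivative definition of $\partial_uv$ in $\cF_{\delta,\eta,m}$ sits. Good support is exactly what removes this obstruction: for $h<r$ the semiflow acts by pure translation in $u$ on $\supp v$, the pointwise derivatives $\partial_tv=\partial_uv$ coincide, and all remaining manipulations reduce to standard Laplace transform identities on $[0,\infty)$.
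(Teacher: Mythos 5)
Your argument is correct in substance and rests on the same underlying mechanism as the paper's proof (good support makes the flow derivative an honest $u$-derivative, and the identity then comes from integrating the Laplace transform by parts), but you implement it \emph{strongly}, at the level of the $L^1$-valued map $t\mapsto L_tv$, whereas the paper works \emph{weakly}: it pairs with an arbitrary $w\in L^\infty(Y^\varphi)$, sets $\rho_{v,w}(t)=\int_{Y^\varphi}v\,w\circ F_t\,d\mu^\varphi$, proves $\rho_{v,w}^{(j)}=(-1)^j\rho_{\partial_t^jv,w}$ via the shift $(\Psi_hv)(y,u)=v(y,u-h)$ with $(\Psi_hv)\circ F_h=v$, integrates by parts $m$ times in the scalar Laplace transform, and concludes since $w$ is arbitrary (your induction on $m$ versus repeated integration by parts is an immaterial difference). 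Your key identity $L_t(v\circ F_h)=L_{t-h}v$ is valid for every $v\in L^1(Y^\varphi)$ by duality and $F_h$-invariance of $\mu^\varphi$; good support is only needed to identify $\lim_{h\to0^+}h^{-1}(v\circ F_h-v)$ with $\partial_tv=\partial_uv$. What the weak formulation buys is that all calculus is done for real-valued functions of $t$, so no vector-valued differentiability is ever invoked; your strong formulation should therefore supply two further justifications: (i) that $h^{-1}(v\circ F_h-v)\to\partial_tv$ in $L^1(\mu^\varphi)$, not merely pointwise, so the limit can be passed through the $L^1$-contraction $L_t$ --- this follows from the mean value theorem in the fibre, the bound $|\partial_uv(y,\xi)|\le e^{\delta(u+1)}|\partial_uv|_{\delta,\infty}$ and integrability of $e^{\delta u}$ on $Y^\varphi$ coming from the exponential tails \eqref{eq:exp}; and (ii) that $t\mapsto L_tv$ is genuinely differentiable as an $L^1$-valued function with continuous derivative $-L_t\partial_tv$ (your shift argument gives only a one-sided derivative as stated), so that the Bochner-valued integration by parts and the boundary terms $L_0v=v$ and $e^{-at}\|L_tv\|_{1}\to0$ are legitimate; the missing side follows from $L_{t+h}v-L_tv=-L_{t+h}(v\circ F_h-v)$ together with strong continuity of $L_t$ on $L^1$. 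With these routine points added, your proof is complete and equivalent to the paper's; the paper's weak version simply sidesteps them.
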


\begin{proof}
Recall that $\supp v\subset \{(y,u)\in Y^\varphi:u\in[r,\varphi(y)-r]\}$ for some $r>0$.
For $h\in[0,r]$, we can define $(\Psi_hv)(y,u)=v(y,u-h)$ and then 
$(\Psi_hv)\circ F_h=v$.

Let $w\in L^\infty(Y^\varphi)$
and write $\rho_{v,w}(t)=\int_{Y^\varphi}v\,w_t\,d\mu^\varphi$ where
$w_t=w\circ F_t$.
Then for $h\in[0,r]$,
\[
\rho_{v,w}(t+h)=\int_{Y^\varphi}v\,w_t\circ F_h\,d\mu^\varphi
=\int_{Y^\varphi}(\Psi_hv)\circ F_h\,w_t\circ F_h\,d\mu^\varphi
=\int_{Y^\varphi}\Psi_hv\,w_t\,d\mu^\varphi.
\]
Hence
\(
h^{-1}(\rho_{v,w}(t+h)-\rho_{v,w}(t))=
\int_{Y^\varphi}h^{-1}(\Psi_hv-v)\,w_t\,d\mu^\varphi
\)
so 
\[
\rho'_{v,w}(t)=
-\int_{Y^\varphi}\partial_tv\,w_t\,d\mu^\varphi
=-\int_{Y^\varphi}\partial_tv\,w\circ F_t\,d\mu^\varphi
=-\rho_{\partial_tv,w}(t).
\]
Inductively,
$\rho^{(j)}_{v,w}(t)= (-1)^j\rho_{\partial_t^j v,w}(t)$.

Now
$\int_{Y^\varphi}\hL(s)v\,w\,d\mu^\varphi
 =\int_0^\infty e^{-st}\int_{Y^\varphi}L_tv\,w\,d\mu^\varphi\,dt
=\int_0^\infty e^{-st}\rho_{v,w}(t)\,dt$, so
repeatedly integrating by parts,
\begin{align*}
\int_{Y^\varphi}\hL(s)v\,w\,d\mu^\varphi
& =
 \sum_{j=0}^{m-1} s^{-(j+1)}\rho^{(j)}_{v,w}(0)+s^{-m}\int_0^\infty e^{-st}\rho^{(m)}_{v,w}(t)\,dt
\\ & = \sum_{j=0}^{m-1} (-1)^js^{-(j+1)}\rho_{\partial_t^jv,w}(0)+(-1)^ms^{-m}\int_0^\infty e^{-st}\rho_{\partial_t^m v,w}(t)\,dt
\\ & = \int_{Y^\varphi}\sum_{j=0}^{m-1} (-1)^js^{-(j+1)}\partial_t^jv\, w\,d\mu^\varphi
+(-1)^ms^{-m}\int_0^\infty e^{-st}\rho_{\partial_t^m v,w}(t)\,dt.
\end{align*}
Finally,
$\int_0^\infty e^{-st}\rho_{\partial_t^m v,w}(t)\,dt=
\int_{Y^\varphi}\hL(s)\partial_t^m v\, w\,d\mu^\varphi$ and the result follows since
$w\in L^\infty(Y^\varphi)$ is arbitrary.
\end{proof}

We can now estimate 
$\|L_tv\|_{\delta,\eta}$.

\begin{cor} \label{cor:partial}
Under the assumptions of Theorem~\ref{thm:normdecay},
there exists $\eps>0$, $m_3\ge1$, $C>0$ such that
\[
\|L_tv\|_{\delta,\eta}
\le Ce^{-\eps t}\|v\|_{\delta,\eta,m_3}
\quad\text{for all $t>0$}  
\]
for all $v\in \cF_{\delta,\eta,m_3}^0(Y^{\varphi})$ with good support.
\end{cor}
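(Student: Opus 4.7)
The plan is to recover $L_tv$ via the inverse Laplace transform and shift the contour from $\Re s=c>0$ into the strip $\Re s=-\eps<0$, so that the prefactor $e^{-\eps t}$ appears automatically. The main difficulty is that on the shifted contour $\|\hL(s)v\|_{\delta,\eta}$ only grows polynomially in $|b|=|\Im s|$ by Lemma~\ref{lem:hatL}(c), so integrability along the contour must be recovered by trading $v$ for its time derivatives via Proposition~\ref{prop:partial}.

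First I would verify that if $v\in\cF_{\delta,\eta,m}^0(Y^\varphi)$ has good support, then each $\partial_t^jv$ with $0\le j\le m$ again has good support, lies in $\cF_{\delta,\eta}(Y^\varphi)$, and satisfies $\int_{Y^\varphi}\partial_t^jv\,d\mu^\varphi=0$. Indeed, for $j\ge 1$ the mean-zero property follows by integrating the $u$-derivative across $[0,\varphi(y)]$ and using that $v$ vanishes near the suspension endpoints by good support; for $j=0$ it is the hypothesis on $v$. Consequently, Lemma~\ref{lem:hatL}(a) applies to each $\partial_t^jv$, so $s\mapsto\hL(s)\partial_t^jv$ is analytic on the strip $\{|a|<\eps\}$.

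Starting from $L_tv=\frac{1}{2\pi i}\int_{c-i\infty}^{c+i\infty}e^{st}\hL(s)v\,ds$ for $c>0$, I would exploit analyticity of $\hL(s)v$ on the strip to shift the contour to $\Re s=-\eps$, picking up no residue. To justify that the horizontal tails vanish, I would apply Proposition~\ref{prop:partial} with $m\ge m_2+2$ to bound
\[
\|\hL(s)v\|_{\delta,\eta}\ll |s|^{-1}\|v\|_{\delta,\eta,m}+|s|^{-m+m_2}\|\partial_t^mv\|_{\delta,\eta}
\]
uniformly on $\Re s\in[-\eps,c]$, $|\Im s|\ge 1$, so that the contributions on the horizontal segments $\Im s=\pm R$ are $O(R^{-1})$ as $R\to\infty$. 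On the shifted contour, Proposition~\ref{prop:partial} also yields, by analytic continuation,
\[
\hL(s)v=\sum_{j=0}^{m-1}(-1)^js^{-(j+1)}\partial_t^jv+(-1)^ms^{-m}\hL(s)\partial_t^mv,
\]
where the apparent poles at $s=0$ on the right cancel since the left side is analytic. Each polynomial term $s^{-(j+1)}\partial_t^jv$ contributes zero to the contour integral: $e^{st}s^{-(j+1)}$ is analytic in $\{\Re s\le-\eps\}$, so closing the contour to the left across a large semicircle (invoking Jordan's lemma for $t>0$ to handle the $j=0$ case, where the integral is only conditionally convergent) encloses no pole. One is left with
\[
L_tv=\frac{(-1)^m}{2\pi i}\int_{-\eps-i\infty}^{-\eps+i\infty}e^{st}s^{-m}\hL(s)\partial_t^mv\,ds.
\]

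Finally I would estimate this by splitting at $|b|=1$: for $|b|\le1$, $|s|\ge\eps$ together with Lemma~\ref{lem:hatL}(a) applied to $\partial_t^mv\in\cF_{\delta,\eta}^0$ yields a uniform bound on $|s|^{-m}\|\hL(s)\partial_t^mv\|_{\delta,\eta}$; for $|b|\ge1$, $|s|^{-m}\le|b|^{-m}$ and Lemma~\ref{lem:hatL}(c) give $|s|^{-m}\|\hL(s)\partial_t^mv\|_{\delta,\eta}\le C|b|^{m_2-m}\|\partial_t^mv\|_{\delta,\eta}$. Choosing $m_3=m_2+2$ makes the integrand $O(|b|^{-2})$ at infinity, and integration yields $\|L_tv\|_{\delta,\eta}\le Ce^{-\eps t}\|\partial_t^{m_3}v\|_{\delta,\eta}\le Ce^{-\eps t}\|v\|_{\delta,\eta,m_3}$. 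The principal obstacle is keeping the contour shift legal: since the individual terms in the decomposition of Proposition~\ref{prop:partial} carry poles at $s=0$ that cancel only in the full $\hL(s)v$, and since the $j=0$ summand decays at infinity only like $|b|^{-1}$, one must first shift the contour using analyticity of the entire $\hL(s)v$ and only afterwards split into the sum on the shifted contour, treating the $j=0$ integral as a principal value that vanishes by Jordan's lemma.
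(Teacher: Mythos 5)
Your proof is correct and follows essentially the same route as the paper: apply Proposition~\ref{prop:partial}, use Lemma~\ref{lem:hatL} to analytically continue $\hL(s)v$ across the imaginary axis, shift the inversion contour to $\Re s=-\eps$, and choose $m_3=m_2+2$ to make the remainder term integrable in $b$. The only (harmless) departures are cosmetic: you observe via Jordan's lemma that the polynomial terms $s^{-(j+1)}\partial_t^jv$ integrate to zero on the shifted contour, whereas the paper simply bounds those integrals; and you verify mean-zero of $\partial_t^mv$ in order to invoke Lemma~\ref{lem:hatL}(a) on the shifted line, whereas the paper avoids this by using parts (b) and (c), which apply without the mean-zero condition since the shifted contour stays away from the pole at $s=0$.
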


\begin{proof}
Let $m_3=m_2+2$.
By Lemma~\ref{lem:hatL}(a),
$\hL(s):\cF_{\delta,\eta,m_3}^0(Y^{\varphi})\to
\cF_{\delta,\eta}(Y^\varphi)$ is analytic for $|a|\le\eps$.
The alternative expression 
in Proposition~\ref{prop:partial} is also analytic on this region (the apparent singularity at $s=0$ is removable by the equality with the analytic function $\hL$).
Hence we can move the contour of integration to 
$s=-\eps+ib$ when computing the inverse Laplace transform, to obtain
\begin{align*}
L_tv & =\int_{-\infty}^\infty e^{st}
\Big(\sum_{j=0}^{m_3-1} (-1)^j s^{-(j+1)}\partial_t^j v+(-1)^{m_3} s^{-{m_3}}\hL(s)\partial_t^{m_3}v\Big)\,db
\\ & = e^{-\eps t}\sum_{j=0}^{m_3-1}(-1)^j\partial_t^j v\int_{-\infty}^\infty e^{ibt} s^{-(j+1)}\,db
+(-1)^{m_3} e^{-\eps t}\int_{-\infty}^\infty 
 e^{ibt} s^{-{m_3}}\hL(s)\partial_t^{m_3}v \,db.
\end{align*}
The final term is estimated using Lemma~\ref{lem:hatL}(b,c):
\[
\Big\|\int_{-\infty}^\infty 
 e^{ibt} s^{-{m_3}}\hL(s)\partial_t^{m_3}v \,db\Big\|_{\delta,\eta}
\ll \int_{-\infty}^\infty  (1+|b|)^{-(m_2+2)}(1+|b|)^{m_2}\|\partial_t^{m_3} v\|_{\delta,\eta}\,db
\ll \|v\|_{\delta,\eta,{m_3}}.
\]
Clearly, the integrals $\int_{-\infty}^\infty e^{ibt}s^{-(j+1)}\,db$ converge absolutely for $j\ge1$, while the integral for $j=0$ converges as an improper Riemann integral.
Hence altogether we obtain that $\|L_tv\|_{\delta,\eta}
\ll e^{-\eps t}\|v\|_{\delta,\eta,m_3}$.
\end{proof}

For the proof of Theorem~\ref{thm:normdecay},
it remains to estimate $\|\partial_uL_tv\|_{\delta,\eta}$.
Recall that the transfer operator $R_0$ for $F$ has weight function $g$. We have the pointwise formula
$(R_0^kv)(y)=\sum_{F^ky'=y}g_k(y')v(y')$ where 
$g_k=g\,\dots\,g\circ F^{k-1}$.
Let $\varphi_k=\sum_{j=0}^{k-1}\varphi\circ F^j$.

\begin{prop} \label{prop:Lt}
Let $v\in L^1(Y^\varphi)$. Then 
for all $t>0$, $(y,u)\in Y^\varphi$,
\[
(L_tv)(y,u)=\sum_{k=0}^{[t/2]} \sum_{F^ky'=y}g_k(y')1_{\{0\le u-t+\varphi_k(y')<\varphi(y')\}}v(y',u-t+\varphi_k(y')).
\]
\end{prop}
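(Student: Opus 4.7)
The plan is to derive the formula by dualizing against an arbitrary $w\in L^\infty(Y^\varphi)$ and then reading off the pointwise expression for $L_tv$ by recognizing the action of the base transfer operator $R_0^k$.

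First, I would give an explicit description of the semiflow $F_t$. Starting from $(y',u')\in Y^\varphi$, the orbit translates upward in the fibre direction, hitting the roof at time $\varphi(y')-u'$ and wrapping to $(Fy',0)$, and more generally wrapping at times $\varphi_j(y')-u'$ for $j\ge1$. Hence for $t\in[\varphi_k(y')-u',\,\varphi_{k+1}(y')-u')$,
\[
F_t(y',u')=(F^ky',\,u'+t-\varphi_k(y')).
\]
For each fixed $t>0$, this partitions $Y^\varphi$ according to the number $k$ of wraps used during $[0,t]$, namely by the condition $u'\in I_k(y'):=[\varphi_k(y')-t,\,\varphi_{k+1}(y')-t)\cap[0,\varphi(y'))$.

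Second, using this partition I would compute
\[
\int_{Y^\varphi}v\cdot w\circ F_t\,d\mu^\varphi
=\frac{1}{\bar\varphi}\sum_{k\ge0}\int_Y\int_{I_k(y')}v(y',u')\,w(F^ky',u'+t-\varphi_k(y'))\,du'\,d\mu(y').
\]
In each summand I would perform the translation $u=u'+t-\varphi_k(y')$ in the fibre (Jacobian $1$) and then apply the pointwise formula $(R_0^kf)(y)=\sum_{F^ky'=y}g_k(y')f(y')$ for the iterated transfer operator of the base map to rewrite the integral over $y'$ as an integral over $y=F^ky'$. The condition $u'\in[0,\varphi(y'))$ transcribes into the indicator $0\le u-t+\varphi_k(y')<\varphi(y')$. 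Since $w$ is arbitrary, the identity $\int L_tv\,w\,d\mu^\varphi=\int v\cdot w\circ F_t\,d\mu^\varphi$ then gives the claimed pointwise formula.

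Finally, to truncate the sum, I would use that the upper indicator forces $\varphi_k(y')-\varphi(y')<t-u$, while the hypothesis $\varphi\ge2$ (with $\varphi_0:=0$) yields $\varphi_k(y')-\varphi(y')=\sum_{j=1}^{k-1}\varphi(F^jy')\ge 2(k-1)$ for $k\ge1$, so that only $k$ with $2(k-1)<t$ can contribute. I expect the main technical obstacle to be not the algebra but the bookkeeping: making the wrap-count partition rigorous almost everywhere, ensuring the change of variables on each piece is compatible with the identification $(y,\varphi(y))\sim(Fy,0)$, and handling that the inner interval $I_k(y')$ interacts with the endpoint $\varphi(y')$ (which becomes the constraint expressed by the indicator in the $(y,u)$ variables). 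Once these points are handled, the formula follows and the finite truncation is immediate from the tail estimate above.
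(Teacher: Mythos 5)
Your proposal is correct and matches the paper's proof almost step for step: the paper likewise dualizes against an arbitrary $w\in L^\infty(Y^\varphi)$, decomposes $Y^\varphi$ by the number of wraps (the lap number $N_t$, your $k$), changes the fibre variable via $u=u'+t-\varphi_k(y')$, and reads off $R_0^k$ from the resulting sum over $k$-fold $F$-preimages. Your truncation via $\varphi_k-\varphi\ge 2(k-1)$ coming from $\varphi\ge 2$ is exactly the estimate the paper invokes to bound the lap number.
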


\begin{proof}
The lap number $N_t(y,u)\in[0,t/2]\cap\N$ is the unique integer $k\ge0$ such that
$u+t-\varphi_k(y)\in[0,\varphi(F^ky))$.
In particular, $F_t(y,u)=(F^{N_t(y,u)}y,u+t-\varphi_{N_t(y,u)}(y))$.
For $w\in L^\infty(Y^\varphi)$,
\begin{align*}
\int_{Y^\varphi} & L_t(1_{\{N_t=k\}} v)\,w\,d\mu^\varphi
  = 
\int_{Y^\varphi}1_{\{N_t=k\}} v\,w\circ F_t\,d\mu^\varphi
 \\ & = \bar\varphi^{-1}
\int_Y \int_0^{\varphi(y)}1_{\{0\le u+t-\varphi_k(y)<\varphi(F^ky)\}} v(y,u)\,w(F^ky,u+t-\varphi_k(y))\,du\,d\mu
 \\ & = \bar\varphi^{-1}
\int_Y \int_0^{\varphi(F^ky)}1_{\{0\le u-t+\varphi_k(y)<\varphi(y)\}} v(y,u-t+\varphi_k(y))\,w(F^ky,u)\,du\,d\mu.
\end{align*}
Writing $v_{t,k}^u(y)=1_{\{0\le u-t+\varphi_k(y)<\varphi(y)\}}v(y,u-t+\varphi_k(y))$ and $w^u(y)=w(y,u)$,
\begin{align*}
\int_{Y^\varphi} & L_t(1_{\{N_t=k\}} v)\,w\,d\mu^\varphi
  = \bar\varphi^{-1} \int_0^\infty
\int_Y 1_{\{u<\varphi\circ F^k\}}v_{t,k}^u\,w^u\circ F^k\,d\mu\,du
 \\ & = \bar\varphi^{-1} \int_0^\infty
\int_Y 1_{\{u<\varphi\}}R_0^k v_{t,k}^u\,w^u\,d\mu\,du
 = \int_{Y^\varphi}(R_0^k v_{t,k}^u)(y)\,w(y,u)\,d\mu^\varphi.
\end{align*}
Hence,
\begin{align*}
(L_t v)(y,u) = 
\sum_{k=0}^{[t/2]} (L_t(1_{\{N_t=k\}} v)(y,u)
= \sum_{k=0}^{[t/2]} (R_0^k v_{t,k}^u)(y).
\end{align*}
The result follows from the pointwise formula for $R_0^k$.
\end{proof}

\begin{pfof}{Theorem~\ref{thm:normdecay}}
Let $m=m_3+1$.
By Corollary~\ref{cor:partial}, $\|L_tv\|_{\delta,\eta}\ll e^{-\eps t}\|v\|_{\delta,\eta,m}$.

Recall that $\partial_u$ denotes the ordinary derivative with respect to $u$ at $0<u<\varphi(y)$ and denotes the appropriate one-sided derivative at $u=0$ and $u=\varphi(y)$.
Since $v$ has good support,
the indicator functions in the right-hand side of the formula in Proposition~\ref{prop:Lt} are constant on the support of $v$.
It follows that
$\partial_u L_tv=L_t(\partial_uv)$.  By Corollary~\ref{cor:partial},
\[
\|\partial_u L_tv\|_{\delta,\eta}
=\|L_t(\partial_u v)\|_{\delta,\eta}
\ll e^{-\eps t}\|\partial_uv\|_{\delta,\eta,m_3}
\le e^{-\eps t}\|v\|_{\delta,\eta,m}.
\]
Hence,
$\|L_tv\|_{\delta,\eta,1}
\ll e^{-\eps t}\|v\|_{\delta,\eta,m}$ as required.
\end{pfof}


\end{document}